\newtheorem{theorem}{Theorem}[section]
\newtheorem{lemma}[theorem]{Lemma}
\newtheorem{proposition}[theorem]{Proposition}
\newtheorem{corollary}[theorem]{Corollary}
\newtheorem{remark}[theorem]{Remark}
\newtheorem{example}[theorem]{Example}
\theoremstyle{remark}{
}
\theoremstyle{definition}{
\newtheorem{definition}[theorem]{Definition}}
\newcommand{\hh}{{\mathbb{H}}}
\newcommand{\rr}{{\mathbb{R}}}
\newcommand{\ext}{\textnormal{ext}}
\newcommand{\s}{{\mathbb{S}}}
\newcommand\im{\operatorname{Im}}
\newcommand{\hslashslash}{%
  \raisebox{.9ex}{%
    \scalebox{.7}{%
      \rotatebox[origin=c]{18}{$-$}%
    }%
  }%
}
\newcommand{\fslash}{%
  {%
   \vphantom{f}%
   \ooalign{\kern.05em\smash{\hslashslash}\hidewidth\cr$f$\cr}%
   \kern.05em
  }%
}
\title{\bf A local representation formula for quaternionic slice regular functions}
\author{Graziano Gentili, Caterina Stoppato\\
\\
\small Dipartimento di Matematica e Informatica ``U. Dini'', Universit\`a degli Studi di Firenze \\
\small Viale Morgagni 67/A, I-50134 Firenze, Italy\\
\small graziano.gentili@unifi.it, caterina.stoppato@unifi.it}
\date{  }
\begin{document}

\maketitle

%%%%%%%%%%%%%%%%%%%%%%%%%%%%%%%%%%%%%%%%

\begin{abstract}
After their introduction in 2006, quaternionic slice regular functions have mostly been studied over domains that are symmetric with respect to the real axis. This choice was motivated by some foundational results published in 2009, such as the Representation Formula for axially symmetric domains.

The present work studies slice regular functions over domains that are not axially symmetric, partly correcting the hypotheses of some previously published results. In particular, this work includes a Local Representation Formula valid without the symmetry hypothesis. Moreover, it determines a class of domains, called simple, having the following property: every slice regular function on a simple domain can be uniquely extended to the symmetric completion of its domain.
\end{abstract}

%%%%%%%%%%%%%%%%%%%%%%%%%%%%%%%%%%%%%%%%

\thanks{\small \noindent{\bf Acknowledgements.} This work was partly supported by INdAM, through: GNSAGA; INdAM project ``Hypercomplex function theory and applications''. It was also partly supported by MIUR, through the projects: Finanziamento Premiale FOE 2014 ``Splines for accUrate NumeRics: adaptIve models for Simulation Environments''; PRIN 2017 ``Real and complex manifolds: topology, geometry and holomorphic dynamics''. The authors of this work are deeply grateful to the anonymous reviewer for carefully reading the manuscript and providing useful suggestions.}

%%%%%%%%%%%%%%%%%%%%%%%

\section{Introduction}\label{sec:introduction}

The theory of quaternionic slice regular functions was introduced in~\cite{cras,advances} as a possible quaternionic analog of the theory of holomorphic complex functions. Let us denote the algebra of quaternions as $\hh$; the real axis as $\rr$; the $2$-sphere of quaternionic imaginary units as $\s$; and the $2$-plane spanned by $1$ and by any $I\in\s$ as $L_I$. If $T \subseteq \hh$, for each $I \in \s$, let $T_I := T \cap L_I$. As usual, we endow $\hh=\rr^4$ with the Euclidean topology and call an open connected subset of $\hh$ a \emph{domain}.

\begin{definition}
Let $\Omega\subseteq\hh$ be a domain and consider a function $f:\Omega\to\hh$. For each $I \in \s$, let $f_I := f_{|_{\Omega_I}}$ be the restriction of $f$ to $\Omega_I$. The restriction $f_I$ is called \emph{holomorphic} if it has continuous partial derivatives and
\begin{equation}
\bar \partial_I f(x+yI) := \frac{1}{2} \left( \frac{\partial}{\partial x} + I \frac{\partial}{\partial y} \right) f_I(x+yI) \equiv 0.
\end{equation}
The function $f$ is called \emph{slice regular} if, for all $I \in \s$, $f_I$ is holomorphic.
\end{definition}

While the first works within the theory of slice regular functions focused on the case when $\Omega$ is a Euclidean ball centered at the origin, it soon became clear that there were larger classes of quaternionic domains over which the theory was interesting and had useful applications. Indeed, the next definition and theorem were published in~\cite{advancesrevised} and in~\cite{poli}, respectively.

\begin{definition}\label{slicedomain}
Let $\Omega$ be a domain in $\hh$ that intersects the real axis. $\Omega$ is called a \emph{slice domain} if, for all $I \in \s$, the intersection $\Omega_I$ with the complex plane $L_I$ is a domain of $L_I$.
\end{definition}

\begin{theorem}[Identity Principle]\label{identity}
Let $f,g$ be slice regular functions on a slice domain $\Omega$. If, for some $I \in \s$, $f$ and $g$ coincide on a subset of $\Omega_I$ having an accumulation point in $\Omega_I$, then $f = g$ in $\Omega$.
\end{theorem}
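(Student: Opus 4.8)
The plan is to reduce the statement to a vanishing result for the difference $h := f - g$ and then to propagate the vanishing from the given slice to every slice by passing through the real axis. First I would observe that $h$ is slice regular on $\Omega$: the operator $\bar\partial_I$ is real-linear, so $\bar\partial_I h_I = \bar\partial_I f_I - \bar\partial_I g_I \equiv 0$ for every $I$, and moreover $h$ vanishes on a subset of $\Omega_I$ having an accumulation point in $\Omega_I$. It then suffices to prove $h \equiv 0$ on $\Omega$.

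The key tool is the classical Identity Principle for holomorphic functions of one complex variable, applied slice by slice. To deploy it on the $\hh$-valued restriction $h_I$, I would invoke the splitting of quaternions: fixing $J \in \s$ with $J \perp I$, every value decomposes uniquely as $z + wJ$ with $z, w \in L_I$, so that $h_I = F + GJ$ for functions $F, G \colon \Omega_I \to L_I$. The condition $\bar\partial_I h_I \equiv 0$ is equivalent to $\bar\partial_I F \equiv 0$ and $\bar\partial_I G \equiv 0$, i.e.\ $F$ and $G$ are genuine holomorphic functions on $\Omega_I$ after identifying $L_I \cong \cc$. Since $h_I$ vanishes on a set with an accumulation point in $\Omega_I$, so do $F$ and $G$; because $\Omega_I$ is a domain of $L_I$ by the slice domain hypothesis, the classical Identity Principle forces $F \equiv G \equiv 0$, hence $h_I \equiv 0$ on $\Omega_I$.

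Next I would transfer this vanishing to the real axis and then to every other slice. Since $\Omega$ is a slice domain it meets $\rr$, so $\Omega \cap \rr$ is a nonempty open subset of $\rr$ contained in $\Omega_I$; thus $h$ vanishes on $\Omega \cap \rr$. For an arbitrary $J \in \s$, the same set $\Omega \cap \rr$ lies in $\Omega_J$ and, being a nonempty open subset of $\rr$, has accumulation points belonging to $\Omega_J$. Repeating the splitting argument on the slice $L_J$ — again using that $\Omega_J$ is a domain — yields $h_J \equiv 0$ on $\Omega_J$. Finally, since every point of $\Omega$ lies on $\Omega_J$ for some $J \in \s$ (write it as $x + yJ$ with $x,y \in \rr$), we conclude $h \equiv 0$ on $\Omega = \bigcup_{J \in \s} \Omega_J$, that is $f = g$.

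The main obstacle — and the only place where the full strength of the hypotheses is used — is the propagation across slices. The real axis serves as the common seed shared by all the planes $L_J$, and the slice domain hypothesis intervenes in two distinct ways: it guarantees that $\Omega$ meets $\rr$ (so that there is a nonempty seed to begin with) and that each $\Omega_J$ is connected (so that the classical Identity Principle applies on the whole slice, not merely on the component containing the seed). I would expect that weakening either aspect — a domain disjoint from $\rr$, or one whose slices are disconnected — breaks the propagation, which is presumably precisely the phenomenon the rest of the paper is designed to analyze.
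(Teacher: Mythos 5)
Your proof is correct. Note that the paper itself does not prove Theorem~\ref{identity} at all --- it quotes it as a known result from~\cite{poli} --- and your argument (orthogonal splitting $h_I = F + GJ$ into $L_I$-valued holomorphic components, the classical one-variable Identity Principle on the connected slice $\Omega_I$, then propagation through the nonempty open seed $\Omega \cap \rr$ to every other slice $\Omega_J$) is precisely the standard proof given in that reference, so there is nothing to fix.
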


Furthermore, for slice regular functions over slice domains fulfilling the next definition, the work~\cite{advancesrevised} proved a strong property called Representation Formula.

\begin{definition}\label{axiallysymmetric}
A set $T \subseteq \hh$ is called \emph{(axially) symmetric} if, for all points $x+yI \in T$ with $x,y \in \rr$ and $I \in \s$, the set $T$ contains the whole sphere $x+y\s$.
\end{definition}

\begin{theorem}[Representation Formula]\label{R-representationformula}
Let $f$ be a slice regular function on a symmetric slice domain $\Omega$ and let $x+y\s \subset \Omega$. For all $I,J,K \in \s$ with $J \neq K$
\begin{eqnarray}
f(x+yI) &=& (J-K)^{-1} \left[J f(x+yJ) - K f(x+yK)\right] +\label{generalrepresentationformula} \\
&+& I (J-K)^{-1} \left[f(x+yJ) - f(x+yK)\right]\,.\nonumber
\end{eqnarray}
Moreover, the quaternion $b := (J-K)^{-1} \left[J f(x+yJ) - K f(x+yK)\right]$ and the quaternion $c := (J-K)^{-1} \left[f(x+yJ) - f(x+yK)\right]$ do not depend on $J,K$ but only on $x,y$.
\end{theorem}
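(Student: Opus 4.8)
The plan is to realize the right-hand side of~\eqref{generalrepresentationformula} as a slice regular function that agrees with $f$ on two whole slices, and then to close the argument with the Identity Principle (Theorem~\ref{identity}). Writing $F(x,y):=f(x+yJ)$ and $G(x,y):=f(x+yK)$, I would set
\[
b(x,y):=(J-K)^{-1}\left[J\,F(x,y)-K\,G(x,y)\right],\qquad c(x,y):=(J-K)^{-1}\left[F(x,y)-G(x,y)\right],
\]
and define the candidate $h(x+yI):=b(x,y)+I\,c(x,y)$. Because $\Omega$ is symmetric, it contains the whole sphere $x'+y'\s$ whenever it contains a point $x'+y'I'$; in particular the two points feeding $b$ and $c$ always lie in $\Omega$, so $h$ is well defined on all of $\Omega$, and it is of class $C^1$ since $f_J$ and $f_K$ are.

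First I would prove that $h$ is slice regular. On an arbitrary slice $L_I$ a direct computation gives $\bar\partial_I h=\tfrac12(b_x-c_y)+\tfrac{I}{2}(c_x+b_y)$, so it suffices to check that the pair $(b,c)$ satisfies the Cauchy--Riemann system $b_x=c_y$, $c_x=-b_y$. Here I would invoke the holomorphy of $f_J$ and $f_K$, which reads $F_x=-J\,F_y$ and $G_x=-K\,G_y$; substituting these into the derivatives of $b$ and $c$ and using $J^2=K^2=-1$ collapses the $J$'s and $K$'s into scalars, yielding $b_x=(J-K)^{-1}(F_y-G_y)=c_y$ and $c_x=(J-K)^{-1}(-J\,F_y+K\,G_y)=-b_y$.

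Next I would verify that $h$ coincides with $f$ on $L_J$ and $L_K$. Evaluating $h(x+yJ)=b+Jc$ and recombining the terms relies on the non-commutative identity $(J-K)^{-1}K=-J(J-K)^{-1}$ (equivalently $K(J-K)=-(J-K)J$, which reduces to $KJ+1=1+KJ$); it gives $b=F-Jc$, hence $h(x+yJ)=F$. The twin identity $(J-K)^{-1}J=-K(J-K)^{-1}$ yields $b=G-Kc$ and therefore $h(x+yK)=G$. Thus $h$ and $f$ agree on all of $\Omega_J$, a set with accumulation points in $\Omega_J$, and since both are slice regular on the slice domain $\Omega$, the Identity Principle forces $h\equiv f$. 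In particular $f(x+yI)=b+I\,c$ for every $I\in\s$, which is precisely~\eqref{generalrepresentationformula}. The independence of $b$ and $c$ from $J,K$ is then immediate: once $f(x+yI)=b+I\,c$ holds for all $I$, evaluating at any two distinct units $P,Q$ returns $c=(P-Q)^{-1}[f(x+yP)-f(x+yQ)]$ and $b=f(x+yP)-Pc$, so every admissible pair reproduces the same $b,c$.

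I expect the main obstacle to be algebraic rather than analytic: all the delicate points concern the non-commutativity of $\hh$. One must carefully avoid commuting $I$, $J$ or $K$ past $(J-K)^{-1}$, and both the Cauchy--Riemann check and the slice-matching rest on the specific relations $J^2=K^2=-1$ together with $(J-K)^{-1}K=-J(J-K)^{-1}$. A minor separate point is the degenerate case $y=0$, where the sphere reduces to the real point $x$ and the formula holds trivially with $c=0$.
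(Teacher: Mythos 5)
Your analytic and algebraic steps are individually correct (the Cauchy--Riemann check for the pair $(b,c)$, the identities $(J-K)^{-1}K=-J(J-K)^{-1}$ and $(J-K)^{-1}J=-K(J-K)^{-1}$, and the final elimination argument for independence), but there is a genuine gap at the very first step: the claim that $h(x+yI):=b(x,y)+I\,c(x,y)$ ``is well defined on all of $\Omega$''. The symmetry of $\Omega$ only guarantees that $b(x,y)$ and $c(x,y)$ make sense; it does not resolve the ambiguity coming from the two representations of each non-real point, $x+yI=x+(-y)(-I)$. Your formula assigns to this single point the two values $b(x,y)+I\,c(x,y)$ and $b(x,-y)-I\,c(x,-y)$, and these coincide for all $I$ if and only if $b(x,\cdot)$ is even and $c(x,\cdot)$ is odd in $y$. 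A short computation shows $b(x,-y)=b_{-J,-K}(x,y)$ and $c(x,-y)=-c_{-J,-K}(x,y)$, where the subscripts indicate which pair of units is used in the definition; so the required compatibility is exactly the instance ``$(J,K)$ versus $(-J,-K)$'' of the independence assertion you are trying to prove, and assuming it here is circular. Without a well-defined $h$ there is no slice regular function on $\Omega$ to feed into the Identity Principle~\ref{identity}, so the proof as written does not close: your per-slice computation secretly defines a \emph{different} function on each slice $L_I$ (a ``stem function'' not yet known to descend to a function on $\Omega$).

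The gap is repairable, and the standard repairs are instructive. (i) Treat the antipodal pair first: for $K=-J$ one gets $b(x,y)=\tfrac12\left[f(x+yJ)+f(x-yJ)\right]$ and $c(x,y)=-\tfrac{J}{2}\left[f(x+yJ)-f(x-yJ)\right]$, which are visibly even, respectively odd, in $y$; then $h$ is well defined, your argument runs verbatim, and the formula for a general pair $(J,K)$ follows by exactly the $2\times2$ elimination you perform in your last step. This is essentially the route of the proofs in the literature — note that the present paper does \emph{not} prove Theorem~\ref{R-representationformula} at all, but quotes it from~\cite{advancesrevised}, with an alternative proof in~\cite{global}; both handle the sign issue in this even/odd (stem function) manner, as does~\cite{perotti}. (ii) Alternatively, dispense with a global $h$: for each fixed $I$ define $h_I$ on $\Omega_I$ by your formula in the slice's own coordinates (all $y\in\rr$); your CR computation shows $h_I$ is holomorphic on the domain $\Omega_I$, it agrees with $f_I$ on the nonempty open set $\Omega\cap\rr$ (where $b(x,0)=f(x)$ and $c(x,0)=0$), and the classical one-variable identity principle, applied after splitting $\hh=L_I\oplus L_I I'$, gives $f_I\equiv h_I$; letting $I$ vary yields~\eqref{generalrepresentationformula}, and independence then follows as in your final paragraph.
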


An alternative proof of the same result is included in the proof of~\cite[Theorem 2.4]{global}. As a consequence of the Representation Formula, every slice regular function on a symmetric slice domain is real analytic, see~\cite[Proposition 7]{perotti}. Another result proven in~\cite{advancesrevised} allows to construct slice regular functions on symmetric slice domains, starting from $\hh$-valued holomorphic functions.

\begin{lemma}[Extension Lemma]\label{extensionlemma}
Let $\Omega$ be a symmetric slice domain and let $I \in \s$. If $f_I : \Omega_I \to \hh$ is holomorphic then there exists a unique slice regular function $g : \Omega \to \hh$ such that $g_I = f_I$ in $\Omega_I$. The function $g$ is denoted by $\ext(f_I)$ and called the \emph{regular extension} of $f_I$.
\end{lemma}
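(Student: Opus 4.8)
The plan is to write down an explicit candidate for the extension, inspired by the Representation Formula~\eqref{generalrepresentationformula}, and then to verify that it fulfills all the required properties. Since $\Omega$ is symmetric, every point $x+yJ \in \Omega$ (with $x,y \in \rr$ and $J \in \s$) forces the whole sphere $x+y\s$ to lie in $\Omega$; in particular both $x+yI$ and $x-yI$ belong to $\Omega_I$. This makes it legitimate to define $g : \Omega \to \hh$ by
\begin{equation*}
g(x+yJ) := \tfrac{1}{2}(1 - JI)\,f_I(x+yI) + \tfrac{1}{2}(1 + JI)\,f_I(x-yI).
\end{equation*}
Setting $J=I$ and using $I^2 = -1$ immediately yields $g_I = f_I$, while $J=-I$ gives $g(x-yI)=f_I(x-yI)$.

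First I would check that $g$ is well defined, since a non-real point admits the two presentations $x+yJ = x+(-y)(-J)$. Substituting $(-y,-J)$ in place of $(y,J)$ in the defining formula simply interchanges the two summands, so the value of $g$ is unchanged; at real points ($y=0$) the formula collapses to $f_I(x)$ independently of $J$. Thus $g$ is a genuine function on $\Omega$, and it inherits continuous partial derivatives from $f_I$.

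Next I would verify slice regularity, which I expect to be the main computational obstacle because of the noncommutativity of $\hh$. Fix $J \in \s$ and set $F(x,y) := f_I(x+yI)$, so that the holomorphy of $f_I$ reads $\partial_x F = -I\,\partial_y F$, equivalently $\partial_y F = I\,\partial_x F$. Writing $g(x+yJ) = \tfrac{1}{2}(1-JI)F(x,y) + \tfrac{1}{2}(1+JI)F(x,-y)$ and applying the operator $\partial_x + J\,\partial_y$, I would use the identities $J(1-JI) = J+I$ and $J(1+JI)=J-I$, together with $(J\pm I)I = JI \mp 1$ and the relation $\partial_y F = I\,\partial_x F$ (evaluated at both $(x,y)$ and $(x,-y)$, taking care of the chain-rule sign coming from $F(x,-y)$). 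The coefficients of $\partial_x F(x,y)$ and of $\partial_x F(x,-y)$ then cancel termwise, giving $\bar\partial_J g \equiv 0$ on $\Omega_J$. Since $J$ was arbitrary, $g$ is slice regular.

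Finally, uniqueness follows from the Identity Principle (Theorem~\ref{identity}): $\Omega$ is a slice domain, and any slice regular $h$ with $h_I = f_I = g_I$ coincides with $g$ on the whole of $\Omega_I$, which certainly has accumulation points in $\Omega_I$, forcing $h = g$ on all of $\Omega$. The delicate points are thus the symmetry-based well-definedness and the careful bookkeeping of left multiplications in the regularity computation; the uniqueness is then immediate.
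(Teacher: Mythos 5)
Your proof is correct. The paper itself does not prove this lemma---it is quoted from \cite{advancesrevised}---and your construction is essentially the classical one from that reference: your candidate $g(x+yJ) = \tfrac{1}{2}(1-JI)f_I(x+yI)+\tfrac{1}{2}(1+JI)f_I(x-yI)$ is exactly the $K=-I$ specialization of the paper's General Extension Formula (Theorem~\ref{extensionformulathm}), and each of your steps checks out (well-definedness under $(y,J)\mapsto(-y,-J)$, the cancellation in the $\bar\partial_J$ computation using $J(1\mp JI)I = JI\mp 1$, and uniqueness via the Identity Principle~\ref{identity}, since $\Omega_I$ is a nonempty domain and hence has accumulation points in itself).
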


After the work~\cite{advancesrevised}, the development of the theory led to many interesting results valid over symmetric slice domains. These results are collected in the monograph~\cite{librospringer} and in a variety of subsequent works. On the other hand, the study of slice regular functions over slice domains that are not symmetric has not been further developed for several years. A possible reason is that~\cite[Theorem 4.1]{advancesrevised} stated that every slice regular function on a slice domain $\Omega$ could be extended in a unique fashion to the symmetric completion $\widetilde \Omega$ of $\Omega$, in accordance with the next definition.

\begin{definition}
The \emph{(axially) symmetric completion} of a set  $T \subseteq \hh$ is the smallest symmetric set $\widetilde{T}$ that contains $T$. In other words,
\begin{equation}
\widetilde{T} := \bigcup_{x+yI \in T} (x+y\s).
\end{equation}
\end{definition}

However, the recent work~\cite{douren1} disproved~\cite[Theorem 4.1]{advancesrevised} by means of a counterexample. It also pointed out that the proof proposed in~\cite{advancesrevised} implicitly applied the Identity Principle~\ref{identity} over the intersection of two sets, which was not a slice domain. 
The same work~\cite{douren1} then introduced the notions of \emph{Riemann slice domain} and \emph{(Riemann) slice domain of regularity}, both being abstract topological spaces. The Representation Formula~\ref{R-representationformula} has been generalized to (Riemann) slice domains of regularity in~\cite{douren1} and to Riemann slice domains in~\cite{douren2}. The work~\cite{douren2} also explored the algebraic structure of slice regular functions over Riemann slice domains.

In contrast with the approach of~\cite{douren1,douren2}, the present work furthers the study over slice domains of $\hh$ that are not symmetric. Our main result is a local version of Theorem~\ref{R-representationformula} valid without the symmetry hypothesis.

\begin{theorem}[Local Representation Formula]
Let $\Omega$ be a slice domain and let $f : \Omega \to \hh$ be a slice regular function. For all $J,K \in \s$ with $J \neq K$ and all $x,y \in \rr$ with $y > 0$ such that $x+yJ, x+yK \in \Omega$, let us set
\begin{eqnarray*}
b(x+yJ,x+yK) &:=& (J-K)^{-1} \left[J f(x+yJ) - K f(x+yK)\right]\,,\\
c(x+yJ,x+yK) &:=& (J-K)^{-1} \left[f(x+yJ) - f(x+yK)\right] \,.
\end{eqnarray*}
Additionally, let us set $b(x,x) := f(x)$ and $c(x,x) := 0$ for all $x \in \Omega \cap \rr$. For every $p_0 \in \Omega$, there exists a slice domain $\Lambda$ with $p_0 \in \Lambda \subseteq \Omega$ such that the following properties hold for all $x,y \in \rr$ with $y \geq 0$:
\begin{itemize}
\item If $U := (x+y\s) \cap \Lambda$ is not empty, then $b,c$ are constant in $(U \times U) \setminus \{ (u,u) : u \in U \setminus \rr\}$.
\item If $I,J,K \in \s$ with $J \neq K$ are such that $x+yI,x+yJ,x+yK \in \Lambda$, then
\begin{equation}
f(x+yI) = b(x+yJ,x+yK) + I c(x+yJ,x+yK) \,.
\end{equation}
\end{itemize}
\end{theorem}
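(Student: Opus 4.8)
The plan is to reduce both bullets to the existence, near $p_0$, of an auxiliary slice regular function that is manifestly of the form $\beta + I\gamma$ (with $\beta,\gamma$ independent of $I$) and that coincides with $f$. First note that the second bullet is essentially algebraic once the first is known: for every pair of distinct $I,K\in\s$ with $x+yI,x+yK\in\Omega$ one has the pointwise identity $f(x+yI)=b(x+yI,x+yK)+I\,c(x+yI,x+yK)$, which follows from $I(I-K)^{-1}+(I-K)^{-1}K=0$ (a one-line check using $(I-K)^{-1}=(K-I)/|I-K|^2$ and $I^2=K^2=-1$). Hence, once we know that $b(\cdot,\cdot),c(\cdot,\cdot)$ take a single value $(\beta_{x,y},\gamma_{x,y})$ on $(U\times U)\setminus\{(u,u):u\in U\setminus\rr\}$, the formula of the second bullet follows by applying this identity to the pair $(I,J)$ (or $(I,K)$) and replacing the values of $b,c$ by the common $\beta_{x,y},\gamma_{x,y}$. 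So the genuine content is the constancy asserted in the first bullet.

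If $p_0\in\rr$ the statement is easy: a small Euclidean ball $B$ centered at $p_0$ with $B\subseteq\Omega$ is axially symmetric, so $\Lambda:=B$ is a symmetric slice domain and Theorem~\ref{R-representationformula} applies verbatim, the conventions $b(x,x)=f(x)$, $c(x,x)=0$ matching the degenerate spheres. Assume now $p_0=x_0+y_0I_0$ with $y_0>0$. Since $\Omega$ is a slice domain, $\Omega_{I_0}$ is a planar domain containing $p_0$ and a real point; fix a path $\gamma$ in $\Omega_{I_0}$ from a real point $a$ to $p_0$. By openness of $\Omega$ and compactness of $\gamma$, one may pick a unit $K_0\in\s$ as close as desired to $I_0$ so that the ``rotated path'' obtained by replacing $I_0$ with $K_0$ still lies in $\Omega$; set also $J_0:=I_0$. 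I would then build $\Lambda$ as a thin tube following $\gamma$: genuinely four-dimensional and axially symmetric near the real endpoint $a$ (so that $\Lambda$ meets $\rr$), tapering to a narrow cap of units around $I_0$ away from the real axis, and small enough that for every $x+yI\in\Lambda$ both $x+yJ_0$ and $x+yK_0$ lie in $\Omega$. The delicate requirement is that $\Lambda$ be a \emph{slice domain}, i.e. that every $\Lambda_I$ be connected; this is exactly the property whose neglect invalidated~\cite{advancesrevised}, and securing it is the main obstacle of the argument.

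On such a $\Lambda$ I would set $\beta_{x,y}:=b(x+yJ_0,x+yK_0)$ and $\gamma_{x,y}:=c(x+yJ_0,x+yK_0)$ for $(x,y)$ in the shadow of $\Lambda$ (the set of $(x,y)$, $y\ge0$, with $x+yI\in\Lambda$ for some $I$), which is legitimate since then $x+yJ_0,x+yK_0\in\Omega$, and define $g(x+yI):=\beta_{x,y}+I\,\gamma_{x,y}$. This $g$ is a genuine function on $\Lambda$: for $y>0$ the triple $(x,y,I)$ is determined by $x+yI$, while at real points $\gamma_{x,0}=0$ and $\beta_{x,0}=f(x)$, so $g(x)=f(x)$. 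Writing $P(x,y)=f(x+yJ_0)$ and $Q(x,y)=f(x+yK_0)$ and using that $P,Q$ are holomorphic on their slices (so $\partial_yP=J_0\partial_xP$ and $\partial_yQ=K_0\partial_xQ$), a direct computation gives the Cauchy--Riemann relations $\partial_x\beta=\partial_y\gamma$ and $\partial_y\beta=-\partial_x\gamma$; since these do not involve $I$, each restriction $g_I$ satisfies $\bar\partial_I g_I\equiv0$, so $g$ is slice regular on $\Lambda$. Moreover $g(x+yJ_0)=\beta_{x,y}+J_0\gamma_{x,y}=f(x+yJ_0)$ by the algebraic identity of the first paragraph, i.e. $g=f$ on the nonempty open subset $\Lambda_{J_0}$ of the slice $L_{J_0}$. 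The Identity Principle~\ref{identity}, applicable precisely because $\Lambda$ is a slice domain and $f,g$ are both slice regular on it, then forces $g=f$ on all of $\Lambda$.

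Finally I would read off both bullets from the resulting equality $f(x+yI)=\beta_{x,y}+I\,\gamma_{x,y}$ on $\Lambda$. For any sphere portion $U=(x+y\s)\cap\Lambda$ and distinct $I,K$ with $x+yI,x+yK\in U$, substituting $f(x+yI)=\beta_{x,y}+I\gamma_{x,y}$ and $f(x+yK)=\beta_{x,y}+K\gamma_{x,y}$ into the definitions collapses $b(x+yI,x+yK)$ to $\beta_{x,y}$ and $c(x+yI,x+yK)$ to $\gamma_{x,y}$ (again using $I^2=K^2=-1$), which is exactly the constancy of the first bullet; the convention at real diagonal points is consistent since $\beta_{x,0}=f(x)$ and $\gamma_{x,0}=0$. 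The second bullet then follows as in the first paragraph. Throughout, the one genuinely hard point is the geometric construction of $\Lambda$: one must simultaneously keep the $J_0$- and $K_0$-slices inside $\Omega$ (to define $g$) and guarantee that every $\Lambda_I$ is connected (to legitimately invoke the Identity Principle), and it is exactly this slice-connectedness that must be handled with the care absent from the earlier literature.
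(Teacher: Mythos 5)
Your overall strategy is sound and in fact runs parallel to the paper's: fix $J_0$, pick a nearby unit $K_0$, build a tube-shaped slice domain around a path joining $p_0$ to the real axis, define an auxiliary function by the two-slice formula $\beta_{x,y}+I\gamma_{x,y}$, identify it with $f$ via the Identity Principle, and then read off both bullets algebraically (your reduction of the second bullet to the first, and the collapse of $b,c$ to $\beta,\gamma$, are correct). But the proposal has a genuine gap exactly where you admit it does: the construction of $\Lambda$ is never carried out, and it is not a routine detail — it is the technical heart of the theorem. The paper devotes a separate lemma to it (Lemma~\ref{gamma}): the tube $\Gamma(C,\varepsilon)$ is a union of balls whose radii shrink proportionally to $|\im(p)|$, and connectedness of every slice $\Gamma_I$ is proved by an explicit angle argument (slices of planes $L_I$ close to $L_{J_0}$ meet the tapered balls in disks centered on a connected set of projections; far planes meet only the balls centered on $\rr$). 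Moreover, your requirement that ``for every $x+yI\in\Lambda$ both $x+yJ_0$ and $x+yK_0$ lie in $\Omega$'' needs a proof, which in the paper is the inequality $|x+yK_0-\gamma(t)|^2-|x+yJ_0-\gamma(t)|^2=2y\beta(t)\left(1-\langle K_0,J_0\rangle\right)\geq 0$ combined with the choice $|K_0-J_0|<\varepsilon/|\im(\gamma(t_0))|$. Without these two ingredients your proof does not exist; declaring the step ``the main obstacle'' does not discharge it.

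A second, smaller gap: your Cauchy--Riemann computation shows $\bar\partial_I g\equiv 0$ only where $g(x+yI)=\beta_{x,y}+I\gamma_{x,y}$ with $y>0$. Since $\Lambda$ is a slice domain it meets $\rr$, and near a real point each slice $\Lambda_I$ contains points with $y<0$, where by your definition $g_I(x+yI)=\beta_{x,-y}-I\gamma_{x,-y}$; holomorphy of $g_I$ across $\rr$ then requires checking that the partials match at $y=0$ (this does work, using $P=Q$ and $\partial_xP=\partial_xQ$ on $\rr$, but it must be verified — it is the reason the paper's proof of the Extension Formula, Theorem~\ref{extensionformulathm}, says that regularity near $\Omega\cap\rr$ ``requires a bit more work'' and handles it via the Extension Lemma on symmetric neighborhoods). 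Note also a structural difference worth knowing: the paper first proves a Local Extension Theorem producing a slice regular $g$ on a full \emph{symmetric} slice domain $N\supseteq\Lambda$, and then simply cites the classical Representation Formula~\ref{R-representationformula} on $N$; this packaging gives a result of independent interest and avoids re-deriving regularity of the formula-defined function from scratch, which is what forces you into the two computations above.
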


Moreover, this work discloses an extension phenomenon for the class of domains described in the next definition.  For all $J\in\s$ and all $T\subseteq\hh$, let us use the notations
\[L_J^+:=\{x+yJ : x,y\in\rr,y>0\}\]
and $T_J^+:=T\cap L_J^+$.

\begin{definition}
A slice domain $\Omega$ is \emph{simple} if, for any choice of $J,K \in \s$, the set
\[\Omega_{J,K}^+:=\{x+yJ \in \Omega_{J}^+: x+yK \in \Omega_{K}^+\}\]
is connected.
\end{definition}

Our new result for simple slice domains is the following.

\begin{theorem}[Extension]
Let $f$ be a slice regular function on a simple slice domain $\Omega$. There exists a unique slice regular function $\tilde f : \widetilde{\Omega} \to \hh$ that extends $f$ to the symmetric completion of its domain.
\end{theorem}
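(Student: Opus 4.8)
The plan is to define the extension by the representation formula and to reduce the whole statement to a single well\nobreakdash-definedness issue, which is exactly where simplicity is used. First I would record two preliminary facts. One: $\widetilde{\Omega}$ is again a symmetric slice domain, since it is open, it meets $\rr$ because $\Omega$ does, and its slices are connected (the projection $\pi(x+yL):=(x,|y|)$ sends $\Omega$ to a connected set meeting $\{y=0\}$, whose reflection across the real axis is $\widetilde{\Omega}_I$). Two: whenever a sphere $x+y\s$ with $y>0$ meets $\Omega$, it does so along an \emph{open}, hence infinite, subset $A(x,y):=\{L\in\s: x+yL\in\Omega\}$ of $\s$. Consequently every sphere contained in $\widetilde{\Omega}$ carries two points $x+yJ,x+yK\in\Omega$ with $J\neq K$, so I can set $\tilde f(x+yI):=b(x+yJ,x+yK)+I\,c(x+yJ,x+yK)$, and $\tilde f(x):=f(x)$ on the reals. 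The entire theorem then rests on showing that this value is independent of the chosen pair $J,K\in A(x,y)$; granting this, slice regularity, the extension property, and uniqueness all follow quickly.

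The heart of the proof is this pair\nobreakdash-independence, and it is here that simplicity is indispensable. Fixing $J\neq K$, the holomorphicity of $f_J$ and $f_K$ shows by a direct computation that $\beta_{JK}(x,y):=b(x+yJ,x+yK)$ and $\gamma_{JK}(x,y):=c(x+yJ,x+yK)$ satisfy $\partial_x\beta_{JK}=\partial_y\gamma_{JK}$ and $\partial_x\gamma_{JK}=-\partial_y\beta_{JK}$, so $(\beta_{JK},\gamma_{JK})$ is a single\nobreakdash-valued, real\nobreakdash-analytic holomorphic stem on $\Omega_{J,K}^+$, which is \emph{connected} precisely by simplicity. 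I would anchor these functions near the real axis: around any point of $\Omega\cap\rr$ a small ball is a symmetric slice domain on which the Representation Formula~\ref{R-representationformula} applies, forcing $\beta_{JK},\gamma_{JK}$ to coincide there with the classical, pair\nobreakdash-independent coefficients and to reconstruct $f$ on the \emph{whole} sphere. The identity I want to propagate from this anchor to all of $\Omega_{J,K}^+$ is $f(x+yL)=\beta_{JK}(x,y)+L\,\gamma_{JK}(x,y)$ for every $L\in A(x,y)$: once it holds at a point, evaluating at $L=J',K'$ shows that any other admissible pair yields the same coefficients, which is pair\nobreakdash-independence.

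The propagation is the main obstacle. I would let $E\subseteq\Omega_{J,K}^+$ be the set where the reconstruction identity holds for all sheets; it is nonempty (the anchor) and relatively closed, since continuity of $f,\beta_{JK},\gamma_{JK}$ together with the openness of $\Omega$ lets one pass to limits sheet by sheet. The delicate point is openness: as $(x,y)$ varies, new sheets $L^{\dagger}$ may enter $A(x,y)$ that were absent on the limiting sphere, and one must verify that $\beta_{JK}$ still reconstructs $f$ along them. Here I would apply the Local Representation Formula around $x+yL^{\dagger}\in\Omega$ to obtain a local holomorphic stem reconstructing $f$ there, and identify it with $(\beta_{JK},\gamma_{JK})$ via the identity principle for real\nobreakdash-analytic stems; this identification is made legitimate by invoking simplicity once more, now for the pair $(J,L^{\dagger})$, so that the comparison takes place on the connected set $\Omega_{J,L^{\dagger}}^+$, which also reaches the real axis. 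Since $\Omega_{J,K}^+$ is connected, one obtains $E=\Omega_{J,K}^+$, hence a globally defined holomorphic stem $(B,C)$ on $\pi(\Omega)\cap\{y>0\}$. It is exactly this repeated reliance on the connectedness of the sets $\Omega_{J,K}^+$ that breaks down for non\nobreakdash-simple domains, in accordance with the known counterexample to the unrestricted extension statement.

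Finally I would assemble the conclusion. Extending $(B,C)$ evenly and oddly in $y$ across the real axis produces a holomorphic slice $g_I:=B+I\,C$ on $\widetilde{\Omega}_I$ for any fixed $I\in\s$; the Extension Lemma~\ref{extensionlemma} then yields a slice regular $\tilde f:=\ext(g_I)$ on the symmetric slice domain $\widetilde{\Omega}$, and by the Representation Formula~\ref{R-representationformula} this $\tilde f$ equals $B+L\,C$ on every slice, so it agrees with the candidate above and, by the reconstruction identity, with $f$ on $\Omega$. Uniqueness is immediate from the Identity Principle~\ref{identity}: two slice regular extensions to $\widetilde{\Omega}$ agree on $\Omega$, in particular on $\Omega_I$, which has accumulation points in the slice $\widetilde{\Omega}_I$, so they coincide on all of $\widetilde{\Omega}$.
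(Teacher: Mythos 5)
Your overall architecture (define $\tilde f$ sphere by sphere via the coefficients $b,c$, reduce everything to pair\nobreakdash-independence, and run an open\nobreakdash-closed argument on the connected set $\Omega_{J,K}^+$ anchored near the real axis) is reasonable, and the nonemptiness and closedness of your set $E$ are fine. The genuine gap is the openness step, and the remedy you propose for it does not work. Membership of $(x_0,y_0)$ in $E$ gives you the reconstruction identity, for each individual sheet $L$, only at the single planar point $(x_0,y_0)$; the difference $h_L(x,y):=f(x+yL)-\beta_{JK}(x,y)-L\,\gamma_{JK}(x,y)$ is indeed holomorphic, but it lives on (the projection of) the \emph{triple} set $\{x+yJ\in\Omega : x+yK\in\Omega,\ x+yL\in\Omega,\ y>0\}$, and a holomorphic function vanishing at one point need not vanish on a neighborhood; so openness is unproven even for the ``old'' sheets, not only for the new ones. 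For the new sheets $L^\dagger$ your fix is to identify $(\beta_{JK},\gamma_{JK})$ with the stem attached to the pair $(J,L^\dagger)$, or with the local stem of Theorem~\ref{localrepresentationthm}, ``by the identity principle on the connected set $\Omega_{J,L^\dagger}^+$''. But an identity principle can only be applied on a connected open set on which \emph{both} functions are defined: $(\beta_{JK},\gamma_{JK})$ lives on $\Omega_{J,K}^+$ and $(\beta_{JL^\dagger},\gamma_{JL^\dagger})$ on $\Omega_{J,L^\dagger}^+$, so the comparison can only take place on their overlap, which is again a triple intersection. Simplicity is a hypothesis on \emph{pairwise} intersections only; it asserts nothing about triple ones, which your argument would need to be connected (and which you neither prove connected nor can assume so). If the triple overlap is disconnected, agreement of the two stems on the anchor component near $\rr$ gives no information on the component containing the sphere at hand. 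This is not a removable technicality: path dependence of exactly this kind of stem continuation is the mechanism behind Example~\ref{ex:douren}, so the entire difficulty of the theorem is concentrated in the step you dispose of with an inapplicable identity principle.

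For contrast, the paper's proof is organized precisely so that no comparison over a triple intersection is ever made. For a fixed base sheet $J_0$ it constructs, via Theorem~\ref{extensionformulathm}, a slice regular $g_0^K$ on the symmetric slice domain over $\Omega_{J_0,K}^+$ for each $K$ (this is where simplicity enters, pairwise); it then defines the extension $g_0$ on the symmetric set $N(J_0)$ over $\Omega_{J_0}^+$ sphere by sphere, using only sheets $K$ so close to $J_0$ that $x+yJ_0$ and $x+yK$ both lie in a single neighborhood $\Lambda$ furnished by Theorem~\ref{localrepresentationthm} at a point of $\Omega_{J_0}^+$, where $K$\nobreakdash-independence holds outright; near each sphere $g_0$ coincides with some $g_0^{K_0}$, whence it is slice regular. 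Finally, the extensions attached to different base sheets $J_0,J_1$ are matched by the Identity Principle~\ref{identity} on the pairwise set $N(J_0)\cap N(J_1)$, which is a slice domain by simplicity and contains the real anchor $\Omega\cap\rr$. To repair your proof you would have to restructure the openness step along such lines, so that stems are only ever compared on pairwise sets reaching the real axis; as written, the crucial step is unjustified.
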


Section~\ref{sec:extensionformula} states and proves a slightly corrected version of the General Extension Formula~\cite[Theorem 4.2]{advancesrevised}, needed in the subsequent pages. Section~\ref{sec:representationformula} presents a Local Extension Theorem, which, besides its independent interest, is used to prove the aforementioned Local Representation Formula. Section~\ref{sec:extension} presents a broad class of examples of simple domains, while the domain used in the counterexample of~\cite{douren1} is shown not to be simple. Section~\ref{sec:extension} also includes a proof of the aforementioned Extension Theorem.

%%%%%%%%%%%%%%%%%%%%%%%%%%%%%%%%%

\section{The General Extension Formula}\label{sec:extensionformula}

The Extension Formula, published in~\cite[Theorem 4.2]{advancesrevised}, provides a method to construct slice regular functions starting from couples of holomorphic functions. We present here a slightly corrected version of the same result.

\begin{theorem}[Extension Formula]\label{extensionformulathm}
Let $J,K$ be distinct imaginary units; let $T$ be a domain in $L_J$, such that $T_J^+$ is connected and $T \cap \rr \neq \emptyset$; let $U := \{x+yK: x+yJ \in T\}$. Choose holomorphic functions $r : T \to \hh, s : U\to \hh$ such that $r_{|_{T \cap \rr}} = s_{|_{U \cap \rr}}$. Let $\Omega$ be the symmetric slice domain such that $\Omega_J^+=T_J^+, \Omega\cap\rr=T\cap\rr$ and set, for all $x+yI \in \Omega$ with $x,y\in\rr, y\geq0$ and $I\in\s$,
\begin{eqnarray}
f(x+yI) &:=& (J-K)^{-1} \left[J r(x+yJ) - K s(x+yK)\right] + \label{extensionformula}\\
&+& I (J-K)^{-1} \left[r(x+yJ) - s(x+yK)\right] \nonumber
\end{eqnarray}
The function $f: \Omega \to \hh$ is the (unique) slice regular function on $\Omega$ that coincides with $r$ in $\Omega_J^+$, with $s$ in $\Omega_K^+$ and with both $r$ and $s$ in $\Omega\cap\rr$.
\end{theorem}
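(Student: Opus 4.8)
The plan is to write the candidate function in the compact form $f(x+yI)=b(x,y)+I\,c(x,y)$, where
\[
b(x,y):=(J-K)^{-1}\bigl[J\,r(x+yJ)-K\,s(x+yK)\bigr],\qquad c(x,y):=(J-K)^{-1}\bigl[r(x+yJ)-s(x+yK)\bigr]
\]
are $\hh$-valued functions of $(x,y)$, defined whenever $x+yJ\in T$ (equivalently $x+yK\in U$). First I would check that $f$ is well defined on $\Omega$: the hypotheses that $T_J^+$ is connected and $T\cap\rr\neq\emptyset$ guarantee that $\Omega$ is a genuine symmetric slice domain with $\Omega_J^+=T_J^+$, and by symmetry every $p=x+yI\in\Omega$ with $y\geq0$ satisfies $x+yJ\in\Omega_J^+\cup(\Omega\cap\rr)\subseteq T$, so the right-hand side makes sense. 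The only representation ambiguity occurs on $\Omega\cap\rr$, where $y=0$; there the hypothesis $r_{|_{T\cap\rr}}=s_{|_{U\cap\rr}}$ forces $c(x,0)=0$ and $b(x,0)=(J-K)^{-1}(J-K)r(x)=r(x)$, so that $f(x)=r(x)=s(x)$ independently of $I$.

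The coincidence statements rest on a single algebraic identity. Since $J^2=K^2=-1$, a direct check gives $(J-K)J=-K(J-K)$; a short computation then shows, for any quaternions $p,q$, that $b+Jc=p$ and $b+Kc=q$ whenever $b=(J-K)^{-1}(Jp-Kq)$ and $c=(J-K)^{-1}(p-q)$. Applying this with $p=r(x+yJ)$ and $q=s(x+yK)$ yields $f(x+yJ)=b+Jc=r(x+yJ)$ on $\Omega_J^+$ and $f(x+yK)=b+Kc=s(x+yK)$ on $\Omega_K^+$, while the real case was settled above.

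The core of the argument is slice regularity. Fixing $I\in\s$ and abbreviating $\alpha(x,y):=r(x+yJ)$, $\beta(x,y):=s(x+yK)$, the holomorphy of $r$ and $s$ reads $\partial_x\alpha+J\partial_y\alpha=0$ and $\partial_x\beta+K\partial_y\beta=0$. Plugging these (together with $J^2=K^2=-1$) into $\partial_x b-\partial_y c$ and $\partial_x c+\partial_y b$, I expect both expressions to vanish identically, which is exactly the condition $\bar\partial_I f_I=\tfrac12[(\partial_x b-\partial_y c)+I(\partial_x c+\partial_y b)]=0$ on the region $y>0$. The subtle point, and the step I expect to be the main obstacle, is passing across the real axis: the formula only encodes the upper half $y\geq0$, so on the lower half of $\Omega_I$ one must rewrite $x+yI=x+(-y)(-I)$ and verify that the resulting even/odd extension (with $b$ even and $c$ odd in $y$) is again holomorphic and, crucially, glues in a $C^1$ (indeed real-analytic) fashion at $y=0$. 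Here the matching hypothesis $r_{|_{T\cap\rr}}=s_{|_{U\cap\rr}}$ re-enters decisively: differentiating along $\rr$ it forces $\partial_y b(x,0)=0$ and $\partial_x c(x,0)=0$, which are precisely the compatibility conditions making the even/odd extension differentiable across $\rr$; continuity of $\bar\partial_I f_I$ then propagates the vanishing to $y=0$ as well.

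Finally, uniqueness follows from the Identity Principle~\ref{identity}. Since $\Omega$ is a slice domain and any slice regular function agreeing with $r$ on $\Omega_J^+$ coincides with $f$ on the nonempty open set $\Omega_J^+\subseteq\Omega_J$, which certainly has accumulation points in $\Omega_J$, Theorem~\ref{identity} forces the two functions to agree throughout $\Omega$.
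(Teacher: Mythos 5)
Your proposal is correct, but at the decisive step it takes a genuinely different route from the paper. The parts in common: well-definedness on $\rr$, the algebraic identities giving $f=r$ on $\Omega_J^+$ and $f=s$ on $\Omega_K^+$, the Cauchy--Riemann computation off the real axis (your identities $\partial_x b=\partial_y c$ and $\partial_x c=-\partial_y b$ do hold, via $\partial_y\alpha=J\partial_x\alpha$ and $\partial_y\beta=K\partial_x\beta$, though you should carry out the computation you only say you ``expect'' to work), and uniqueness via the Identity Principle~\ref{identity}. The divergence is regularity across $\Omega\cap\rr$. The paper argues softly: each connected component of $\Omega\cap\rr$ has a neighborhood $D$ that is a symmetric slice domain with $D_J\subseteq T$; the regular extension $g=\ext(r_{|_{D_J}})$ provided by the Extension Lemma~\ref{extensionlemma} coincides with $\ext(s_{|_{D_K}})$ by the Identity Principle (since $r=s$ on $D\cap\rr$), and the Representation Formula~\ref{R-representationformula} applied to $g$ shows that $g=f_{|_D}$, so $f$ inherits regularity near $\rr$ from $g$. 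You instead argue by hand: $f_I$ is the even/odd extension of $(b,c)$ across $y=0$, the matching hypothesis forces the $C^1$-compatibility conditions $c(x,0)=0$, $\partial_x c(x,0)=0$, $\partial_y b(x,0)=0$, and continuity of $\bar\partial_I f_I$ then propagates its vanishing to the axis. One small precision: $\partial_y b(x,0)=0$ does not follow from tangential differentiation of $r=s$ alone; you must also use holomorphy of $r$ and $s$ to trade $\partial_y$ for $\partial_x$, which gives $\partial_y b(x,0)=(J-K)^{-1}\left[\partial_x s(x)-\partial_x r(x)\right]=0$. As for what each approach buys: the paper's argument is shorter and stays within the established machinery for symmetric slice domains, but depends on Lemma~\ref{extensionlemma} and Theorem~\ref{R-representationformula}; yours is elementary and self-contained, needing only calculus and the definition of slice regularity, at the price of a more delicate reflection-and-gluing analysis.
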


\begin{proof}
Formula~\eqref{extensionformula} yields
\begin{eqnarray*}
f(x+yI) &=& [(J-K)^{-1}J+I(J-K)^{-1}] r(x+yJ) +\\
&-& [(J-K)^{-1}K+I(J-K)^{-1}] s(x+yK).
\end{eqnarray*}
Since
\[(J-K)^{-1}J + J (J-K)^{-1} = |J-K|^{-2} [(K-J) J + J(K-J)] =\]
\[= [(J-K)(K-J)]^{-1} (2 + JK + KJ) = 1\]
and 
\[(J-K)^{-1}K + J (J-K)^{-1} = |J-K|^{-2} [(K-J) K + J(K-J)] =\]
\[= |J-K|^{-2} (-1 -JK +JK +1) = 0,\]
the function $f$ coincides with $r$ in $\Omega_J^+$ and in $\Omega\cap\rr$. Similarly, $f$ coincides with $s$ in $\Omega_K^+$ and in $\Omega\cap\rr$.

We can prove that $f$ is slice regular in $\Omega\setminus\rr$ by showing that, for each $I\in\s$, $f_I$ is holomorphic in $\Omega_I^+$. Indeed, a direct computation shows that
\begin{eqnarray*}
\bar \partial_I f(x+yI) &=& [(J-K)^{-1}J+I(J-K)^{-1}] \bar \partial_J r(x+yJ) +\\
&-& [(J-K)^{-1}K+I(J-K)^{-1}] \bar \partial_K s(x+yK)
\end{eqnarray*}
for $x+yI\in\Omega_I^+$. Since $\bar \partial_J r(x+yJ) \equiv 0$ in $\Omega_J^+$ and $\bar \partial_K r(x+yK) \equiv 0$ in $\Omega_K^+$, it follows that $\bar \partial_I f(x+yI) \equiv 0$ in $\Omega_I^+$.

Proving that $f$ is slice regular near every point of $\Omega\cap\rr$ requires a bit more work. Each connected component of $\Omega\cap\rr$ admits an open neighborhood $D$ in $\Omega$ that is a symmetric slice domain and whose slice $D_J$ is included in the domain $T$ of the function $r$. The domain $U$ of $s$ automatically includes $D_K$. Let us consider the regular extension $g=\ext(r_{|_{D_J}})$ on $D$: $g$ coincides with the regular extension $\ext(s_{|_{D_K}})$ by the Identity Principle~\ref{identity}, because $r$ and $s$ coincide in $D\cap\rr$. In the symmetric slice domain $D$, we can apply the Representation Formula~\eqref{generalrepresentationformula} to $g$. Taking into account that $g_J=r_{|_{D_J}}$ and $g_K=s_{|_{D_K}}$, we get
\begin{eqnarray*}
g(x+yI) &=& (J-K)^{-1} \left[J r(x+yJ) - K s(x+yK)\right] +\\
&+& I (J-K)^{-1} \left[r(x+yJ) - s(x+yK)\right] \nonumber
\end{eqnarray*}
for all $x+yI\in\Omega$. Thus, $g$ coincides with $f_{|_D}$. In particular, $f$ is slice regular in $D$, as desired.

Our final remark concerns the uniqueness of $f$. The Identity Principle~\ref{identity} proves that, for any slice regular function $h:\Omega\to\hh$ coinciding with $r$ in $\Omega\cap\rr$, $h$ coincides with $f$.
\end{proof}

The original statement of the Extension Formula set $\Omega:=\widetilde{T}$ and used formula~\eqref{extensionformula} for all $x+yI \in \Omega$, without requiring $y\geq0$. However, the following example proves that such an $f$ may be ill-defined.

\begin{example}\label{ex:douren}
Let $\Omega$ be the slice domain constructed in~\cite[page 5]{douren1}: it holds $\Omega\supset\rr$; moreover, for every $J\in\s$, 
\[\Omega_J^+:=L_J^+\setminus(h_J\cup a_J)\]
where
\[h_J:=(-\infty,-2)+2J=\{t+2J : t\in (-\infty,-2)\}\]
is a half line with origin $-2+2J$ and $a_J$ is an appropriately defined arc, with endpoints $-2+2J$ and $2J$, within the closed disk 
\[D_J:=\{z\in L_J : |z+1-2J|\leq1\}\,.\]
The definition of $a_J$ for $J\in\s$ is the following: an imaginary unit $I$ is fixed; for each $J\in\s$, it is set $T(J):=\min\{|J-I|,1\}$ and
\[a_J:=-1+2J+\{(1-T(J))e^{2\pi J t}+T(J) e^{-2\pi J t}: t\in[0,1/2]\}\,.\]
In particular, $a_I$ is the upper half of the circle $\partial D_I$ within $L_I^+$ and, for every imaginary unit $J$ with $|J-I|\geq1$ (including $J=-I)$, $a_J$ is the lower half of the circle $\partial D_J$ within $L_J^+$.

Consider the planar domain
\[T:=\Omega_I=L_I \setminus( h_I \cup h_{-I} \cup a_I \cup a_{-I} )\]
and its conjugate
\[U:=\overline{\Omega_I}=L_I \setminus ( h_I \cup h_{-I}\cup \overline{a_{-I}}  \cup \overline{a_I} ).\]
Consider the unique holomorphic functions $r:T\to L_I$ and $s:U\to L_I$ such that $r(x+2I)=\ln(x)=s(x+2I)$ for all $x\in(0,+\infty)$. The intersection $T\cap U$ has three connected components: namely, the open disks that form the interiors of $D_I$ and $D_{-I}$ in $L_I$ and the set $L_I\setminus(h_I \cup h_{-I} \cup D_I \cup D_{-I})$. In the second and third connected component, the functions $r$ and $s$ coincide; in the first component, they differ by a jump. If we apply Theorem~\ref{extensionformulathm} to $r$ and $s$ with $J=I$ and $K=-I$, we get a slice regular function
\[\hh\setminus(\widetilde{h_I}\cup\widetilde{a_I})\to\hh.\]
If, instead, we apply Theorem~\ref{extensionformulathm} to $r$ and $s$ with $J=-I$ and $K=I$, we get a slice regular function
\[\hh\setminus(\widetilde{h_I}\cup\widetilde{a_{-I}})\to\hh.\]
These slice regular functions coincide in the symmetric slice domain $\hh\setminus(\widetilde{h_I}\cup\widetilde{D_I})$ but they differ by a jump in the interior of $\widetilde{D_I}\setminus D_{-I}$.
\end{example}

The function $r$ used in the previous example is the restriction $G_I:\Omega_I\to\hh$ of the slice regular function $G:\Omega\to\hh$ constructed in~\cite[page 5]{douren1}.

%%%%%%%%%%%%%%%%%%%%%%%%%%%%%%%%%

\section{Local extension and representation over slice domains}\label{sec:representationformula}

This section is devoted to proving the Local Extension Theorem and the Local Representation Formula announced in the Introduction. We begin with a useful lemma. In the statement, the expression ``closed interval'' includes the degenerate interval consisting of a single point but excludes the empty set.

\begin{lemma}\label{gamma}
Let $Y$ be an open subset of $\hh$ and let $J_0\in\s$. Let $C$ be a compact and path-connected subset of $Y_{J_0}$ such that $C \cap \rr$ is a closed interval and $\emptyset \neq C \setminus \rr \subset Y_{J_0}^+$. Let $q_0 \in C$ be such that $\max_{p \in C} |\im(p)| = |\im(q_0)|$. Then there exists $\varepsilon>0$ such that
\[\Gamma(C,\varepsilon) := \bigcup_{p \in C \setminus \rr} B\left(p,\frac{|\im(p)|}{|\im(q_0)|}\varepsilon\right) \cup \bigcup_{p \in C \cap \rr} B(p,\varepsilon)\]
is a slice domain and $C \subset \Gamma(C,\varepsilon) \subseteq Y$.
\end{lemma}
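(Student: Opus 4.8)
The plan is to exhibit a single small $\varepsilon>0$ for which all defining properties of a slice domain hold, together with the inclusions $C\subset\Gamma(C,\varepsilon)\subseteq Y$. Write $y_0:=|\im(q_0)|>0$ and note that for $p=x+yJ_0\in C$ one has $|\im(p)|=y$ with $y\geq0$; thus every ball in the definition has the form $B(p,r_p)$ with $r_p=(y/y_0)\,\varepsilon$ when $p\in C\setminus\rr$ and $r_p=\varepsilon$ when $p\in C\cap\rr$, so in all cases $r_p\leq\varepsilon$. Several properties are immediate for every $\varepsilon>0$: the set is open (a union of open balls), it contains $C$ (each $p$ is a centre of its own ball), and it meets $\rr$ (since $C\cap\rr\neq\emptyset$). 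For $\Gamma(C,\varepsilon)\subseteq Y$ I would use that $C$ is compact and contained in the open set $Y$, whence $\delta:=\mathrm{dist}(C,\hh\setminus Y)>0$; taking $\varepsilon<\delta$ forces every $B(p,r_p)\subseteq B(p,\varepsilon)\subseteq Y$. Connectedness of $\Gamma(C,\varepsilon)$ then follows from the standard fact that the union of a connected set $A$ with a family of connected sets each meeting $A$ is connected: apply it with $A=C$ and the family $\{B(p,r_p)\}_{p\in C}$.

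The heart of the argument is the slice condition, i.e. that $\Gamma(C,\varepsilon)\cap L_I$ is a domain of $L_I$ for every $I\in\s$. The decisive computation is the distance from $p=x+yJ_0$ to the plane $L_I$: since the real part of $p$ already lies in $L_I$, only the imaginary part $yJ_0$ contributes, giving $\mathrm{dist}(p,L_I)=y\,\sigma(I)$ with $\sigma(I):=\sqrt{1-\langle J_0,I\rangle^2}\in[0,1]$ (inner product on $\im\hh\cong\rr^3$). The key is the resulting scale invariance: for $p\in C\setminus\rr$ the open ball $B(p,r_p)$ meets $L_I$ if and only if $y\,\sigma(I)<(y/y_0)\varepsilon$, i.e. $\sigma(I)<\varepsilon/y_0$, a condition on $I$ alone. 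I would split on this threshold.

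If $\sigma(I)\geq\varepsilon/y_0$, no non-real ball reaches $L_I$, so $\Gamma(C,\varepsilon)\cap L_I$ equals the union over $x\in C\cap\rr$ of the disks $B(x,\varepsilon)\cap L_I$; as $C\cap\rr$ is a closed interval this is the intersection with $L_I$ of a convex ``capsule'' around that segment, hence a nonempty convex, and so connected, open subset of $L_I$. If $\sigma(I)<\varepsilon/y_0$, then for every $p\in C$ the orthogonal projection $\pi_I(p)$ onto $L_I$ satisfies $\mathrm{dist}(p,L_I)<r_p$, so $\pi_I(p)\in B(p,r_p)\cap L_I$. Hence the connected set $\pi_I(C)$ (the continuous linear image of the path-connected $C$) lies in $\Gamma(C,\varepsilon)\cap L_I$ and meets each disk $B(p,r_p)\cap L_I$ at $\pi_I(p)$; applying the same union-of-connected-sets fact with $A=\pi_I(C)$ shows that $\Gamma(C,\varepsilon)\cap L_I=\bigcup_{p\in C}\bigl(B(p,r_p)\cap L_I\bigr)$ is connected. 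In both cases the slice is a domain of $L_I$, completing the verification.

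The main obstacle is exactly this slice condition, and the governing insight is the proportional shrinking of the radii $r_p=(|\im p|/|\im q_0|)\varepsilon$: it makes the incidence ``$B(p,r_p)$ meets $L_I$'' depend only on $I$, so that $\pi_I$ retracts $C$ into the slice and transports the connectivity of $C$ to it; without this scaling the slices near $\rr$ could a priori split into several components. I would finally check the degenerate subcases uniformly covered above, namely when $C\cap\rr$ reduces to a single point and when $I=\pm J_0$ so that $\sigma(I)=0<\varepsilon/y_0$.
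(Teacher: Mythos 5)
Your proposal is correct and follows essentially the same route as the paper's proof: compactness yields the admissible $\varepsilon$, the union-of-balls-centered-on-$C$ argument gives connectedness, and the slice condition is settled by the same dichotomy on the threshold $\sigma(I)=\sin\vartheta$ versus $\varepsilon/|\im(q_0)|$ (real-centered disks in one case, orthogonal projection of $C$ into $L_I$ transporting connectedness in the other). The only cosmetic differences are that you phrase the threshold via $\sigma(I)$ rather than $\vartheta_0=\arcsin(\varepsilon/|\im(q_0)|)$, dispensing with the paper's shrinking $\varepsilon<|\im(q_0)|$, and you use convexity of the capsule where the paper reuses its union-of-connected-sets argument.
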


\begin{proof}
By compactness, there exists an $\varepsilon>0$ such that, for every $p \in C$, the Euclidean ball $B(p,\varepsilon)$ is included in $Y$. Up to shrinking $\varepsilon$, it also holds $\varepsilon<|\im(q_0)|$. Clearly, $\Gamma = \Gamma(C,\varepsilon)$ has the desired property $C \subset \Gamma \subseteq Y$. Let us prove that $\Gamma$ is a slice domain.

The union of open balls, each centered at one point of $C$, is a domain: it is obviously open; it is path-connected because any point can be joined to the center of a ball by a line segment, while centers are connected by paths within $C$.

The domain $\Gamma$ intersects $\rr$ by construction.

Moreover, for each $I\in\s$, the slice $\Gamma_I$ is a domain in $L_I$. Indeed, let $\vartheta_0:=\arcsin\frac{\varepsilon}{|\im(q_0)|}$ and let $\vartheta \in [0,\pi/2]$ be the angle between $L_I$ and $L_{J_0}$ within the $3$-space $\rr+I\rr+J_0\rr$. There are two possibilities.
\begin{itemize}
\item If $\vartheta\geq\vartheta_0$ then, for any $p \in C \setminus \rr$, the distance $|\im(p)|\sin\vartheta$ between $p$ and $L_I$ is greater than, or equal to, $|\im(p)| \sin\vartheta_0=\frac{|\im(p)|}{|\im(q_0)|}\varepsilon$. Thus,
\[\Gamma_I = \bigcup_{p \in C \cap \rr} B(p,\varepsilon) \cap L_I\]
is a union of open disks centered at points of the closed interval $C \cap \rr$.
\item If $\vartheta<\vartheta_0$ then every ball $B\left(p,\frac{|\im(p)|}{|\im(q_0)|}\varepsilon\right)$ with $p \in C \setminus \rr$ and every ball $B(p,\varepsilon)$ with $p \in C \cap \rr$ intersects $L_I$ in an open disk centered at the orthogonal projection $p_I$ of $p$ on $L_I$. Such centers form a compact and path-connected subset of $L_I$.
\end{itemize}
In both cases, $\Gamma_I$ is a domain in $L_I$ by the argument we already used for $\Gamma$.
\end{proof}

We are now in a position to prove the first result we announced.

\begin{theorem}[Local Extension]\label{localextension}
Let $f$ be a slice regular function on a slice domain $\Omega$. For every $p_0 \in \Omega$, there exist a symmetric slice domain $N$ with $N \cap \rr \subset \Omega$, a slice domain $\Lambda$ with $p_0 \in \Lambda \subseteq \Omega \cap N$, and a slice regular function $g : N \to \hh$ such that $g$ coincides with $f$ in $N \cap \rr$, whence in $\Lambda$.
\end{theorem}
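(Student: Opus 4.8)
The plan is to reduce, by means of a connecting path inside a single slice, to a situation in which the Extension Formula (Theorem~\ref{extensionformulathm}) can be applied directly; the only real difficulty is producing the ``second slice'' of holomorphic data that the symmetry hypothesis would otherwise furnish for free. First dispose of the case $p_0 \in \rr$: since $\Omega$ is open it contains a ball $B(p_0,\delta)$, which is a symmetric slice domain, so one may take $N = \Lambda = B(p_0,\delta)$ and $g = f_{|_{N}}$. Assume then $p_0 = x_0 + y_0 J_0$ with $y_0 > 0$ and $J_0 \in \s$. The slice $\Omega_{J_0}$ is a domain of $L_{J_0}$ meeting $\rr$, hence path-connected, so I can join $p_0$ to a real point of $\Omega$ by a path in $\Omega_{J_0}$; truncating it at the first instant it meets $\rr$ yields a compact, path-connected arc $C \subseteq \Omega_{J_0}$ with $p_0 \in C$, with $C \cap \rr$ a single point $a_*$ (a degenerate closed interval), and with $C \setminus \rr \subseteq \Omega_{J_0}^+$. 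This last inclusion is the crucial gain and holds automatically, because the portion of the path preceding its first real value cannot cross $\rr$ and therefore stays in the connected set $L_{J_0}^+$.

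The heart of the matter is the choice of a second imaginary unit. For $K \in \s$ consider the affine slice-reflection $x+yJ_0 \mapsto x+yK$; applied to the compact arc $C$ it produces a compact set that depends continuously on $K$ and equals $C$ when $K = J_0$. Since $C \subseteq \Omega$ and $\Omega$ is open, for every $K$ sufficiently close to $J_0$ the reflected arc is still contained in $\Omega$; fix such a $K \neq J_0$. Now let $T$ be a sufficiently thin tubular neighbourhood of $C$ inside $L_{J_0}$. I can arrange simultaneously that $T \subseteq \Omega_{J_0}$, that $T_{J_0}^+$ is connected with $T \cap \rr$ an interval about $a_*$, and — shrinking $T$ and reusing the same compactness argument — that its reflection $U := \{x+yK : x+yJ_0 \in T\}$ lies in $\Omega$, hence in $\Omega_K$. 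Setting $r := f_{|_{T}}$ and $s := f_{|_{U}}$ gives two holomorphic functions agreeing on $T \cap \rr = U \cap \rr$ (where both coincide with $f$). The hypotheses of Theorem~\ref{extensionformulathm} are met, and it produces a slice regular $g$ on the symmetric slice domain $N$ determined by $N_{J_0}^+ = T_{J_0}^+$ and $N \cap \rr = T \cap \rr$, with $g = f$ on $N \cap \rr$ and $g_{J_0} = r = f_{J_0}$ on $N_{J_0}^+$.

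It remains to read off the conclusion. By construction $N \cap \rr = T \cap \rr \subseteq \Omega \cap \rr \subseteq \Omega$ and $g = f$ there. Moreover $C \subseteq \Omega \cap N$: indeed $C \subseteq \Omega_{J_0} \subseteq \Omega$, while $C \setminus \rr \subseteq T_{J_0}^+ = N_{J_0}^+ \subseteq N$ and $a_* \in T \cap \rr = N \cap \rr \subseteq N$. Applying Lemma~\ref{gamma} to the same arc $C$ and the same $J_0$, but with ambient open set $\Omega \cap N$, produces a slice domain $\Lambda := \Gamma(C,\varepsilon)$ with $p_0 \in C \subseteq \Lambda \subseteq \Omega \cap N$. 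Finally, $\Lambda$ is a slice domain on which both $f$ and $g$ are slice regular and on which they agree along the real interval $\Lambda \cap \rr$; the Identity Principle~\ref{identity} then forces $g = f$ throughout $\Lambda$, which completes the argument.

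The step I expect to be the genuine obstacle is precisely the production of the partner datum $s$. Over a symmetric domain the two slices $f_{J_0}$ and $f_K$ are linked automatically, but here $\Omega$ need not contain the reflection of $T$ at all, so there is a priori no candidate for $s$. The device that resolves this — choosing $K$ close to $J_0$ so that the reflection of the \emph{compact} arc $C$, and of a thin enough neighbourhood of it, remains inside the open set $\Omega$ — is exactly what renders the symmetry hypothesis dispensable, and it is the point at which truncating the connecting path to a single half-plane is indispensable.
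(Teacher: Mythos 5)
Your proof is correct and follows the same overall strategy as the paper's: truncate a path in $\Omega_{J_0}$ at its first real point, pick a second unit $K$ close to $J_0$, feed two half-slice restrictions of $f$ into the Extension Formula (Theorem~\ref{extensionformulathm}), and finish with Lemma~\ref{gamma} plus the Identity Principle. The one place where you genuinely diverge is the containment step for the second datum. The paper first thickens the path into the weighted neighbourhood $M := \Gamma(\gamma([0,1]),\varepsilon)$ of Lemma~\ref{gamma}, chooses $K_0$ so that the reflected curve lies in $M_{K_0}$, and takes $N$ to be the symmetric completion of the \emph{entire} slice $M_{K_0}$; it must then prove $N_{J_0}^+ \subseteq M_{J_0}^+ \subseteq \Omega_{J_0}$, which it does by the explicit computation $|x+yK_0-\gamma(t)|^2-|x+yJ_0-\gamma(t)|^2 = 2y\beta(t)\left(1-\langle K_0, J_0\rangle\right) \geq 0$, i.e., reflecting back towards $L_{J_0}$ can only decrease the distance to the curve. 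You instead keep the primary datum in $L_{J_0}$ (a thin tube $T$ around the arc $C$) and push the whole tube over to $L_K$, checking it stays inside $\Omega$ by a two-step compactness argument: first reflect the compact arc, then shrink the tube. Since the reflection $x+yJ_0 \mapsto x+yK$ is an isometry of planes, this works, and it buys a completely elementary containment proof with no distance computation; the price is a smaller $N$ (the completion of a thin tube rather than of all of $M_{K_0}$), which is irrelevant to the statement being proved.

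One imprecision: a tubular neighbourhood of \emph{constant} thickness around $C$ need not meet $\rr$ in a single interval about $a_*$ --- the arc may re-approach the real axis before its final landing, in which case $T \cap \rr$ picks up extra components. This does no harm, for two reasons. First, Theorem~\ref{extensionformulathm} only requires $T$ to be a domain with $T_{J_0}^+$ connected and $T\cap\rr\neq\emptyset$, and both hold for your tube (every real point of $T$ is a limit of points of $T_{J_0}^+$, so the associated symmetric set is still a slice domain). Second, if you do want the interval property, replace the constant-thickness tube by the slice $\Gamma(C,\delta)_{J_0}$ of the weighted neighbourhood from Lemma~\ref{gamma}: there the ball around each non-real point has radius strictly smaller than its distance to $\rr$, so the neighbourhood meets $\rr$ exactly in the interval $(a_*-\delta,a_*+\delta)$, as you claimed.
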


\begin{proof}
If $p_0 \in \Omega \cap \rr$ then the thesis is obvious because $\Omega$ contains an open ball centered at $p_0$. We can therefore suppose $p_0=x_0+y_0J_0$ with $x_0,y_0\in\rr$, $y_0>0$ and $J_0\in\s$.

Since $\Omega$ is a slice domain, there exists a continuous path $\gamma : [0,1] \to \Omega_{J_0},  \gamma(t)= \alpha(t)+J_0\beta(t)$, such that $\gamma(0) = p_0$ and $\gamma(1) \in \rr$. Up to restricting and reparametrizing $\gamma$, we can suppose the support of $\gamma$ to only intersect the real axis at $\gamma(1)$. By Lemma~\ref{gamma}, there exists $\varepsilon>0$ such $M := \Gamma(\gamma([0,1]),\varepsilon)$ is a slice domain with the property $\gamma([0,1]) \subset M \subseteq \Omega$. Let $t_0\in[0,1]$ be such that $\max_{[0,1]}|\im(\gamma)|=|\im(\gamma(t_0))|$.

If we choose $K_0 \in \s$ with $0<|K_0-J_0|<\frac{\varepsilon}{|\im(\gamma(t_0))|}$, then the support of the curve $\alpha+K_0\beta$ is included in $M_{K_0}$. Indeed, the distance between $\alpha(t)+K_0\beta(t)$ and $\gamma(t)=\alpha(t)+J_0\beta(t)$ is $|K_0-J_0|\beta(t)$, which is less than $\frac{|\im(\gamma(t))|}{|\im(\gamma(t_0))|}\varepsilon$ for all $t\in[0,1)$ and is $0$ for $t=1$.

Let $N$ be the symmetric completion of the connected set $M_{K_0}$. We point out the following properties of $N$: it includes the support of $\gamma=\alpha+J_0\beta$; it has $N_{K_0}^+=M_{K_0}^+$; and it is a slice domain. Moreover, $N_{J_0}^+ \subseteq M_{J_0}^+ \subseteq \Omega_{J_0}$. Indeed, for each $x+yJ_0 \in N_{J_0}^+$ it holds $x+yK_0 \in N_{K_0}^+=M_{K_0}^+$. By direct computation, for all $t\in[0,1]$, it holds
\[|x+yK_0-\gamma(t)|^2-|x+yJ_0-\gamma(t)|^2 = 2y\beta(t) (1-\langle K_0, J_0 \rangle) \geq 0 \,.\]
Thus, the distance between $x+yJ_0$ and $\gamma(t)$ is less than, or equal to, the distance between $x+yK_0$ and $\gamma(t)$. If $B(\gamma(1),\varepsilon)$ includes $x+yK_0$, then it also includes $x+yJ_0$. Similarly, if there exists $t\in[0,1)$ such that $B\left(\gamma(t),\frac{|\im(\gamma(t))|}{|\im(\gamma(t_0))|}\varepsilon\right)$ includes $x+yK_0$, then the same ball includes $x+yJ_0$. In both cases, $x+yJ_0$ belongs to $M_{J_0}^+$.

By the General Extension Formula~\ref{extensionformulathm} there exists a unique slice regular function $g: N \to \hh$ that coincides with $f_{J_0}$ in $N_{J_0}^+ \subseteq M_{J_0}^+ \subseteq \Omega_{J_0}$, with $f_{K_0}$ in $N_{K_0}^+=M_{K_0}^+$ and with $f$ in $N \cap \rr = M_{K_0} \cap \rr$.

Within the open set $N \cap \Omega$, the slice $(N \cap \Omega)_{J_0}$ includes the support of $\gamma$. Lemma~\ref{gamma} guarantees that there exists $\varepsilon_0>0$ such that the slice domain $\Lambda := \Gamma(\gamma([0,1]),\varepsilon_0)$ has the property $\gamma([0,1]) \subset \Lambda \subseteq N\cap\Omega$. Now, $g$ and $f$ coincide in $N \cap \Omega \cap \rr = N \cap \rr$, whence throughout $\Lambda$ by the Identity Principle~\ref{identity}.
\end{proof}

We can draw several consequences. Since any slice regular function $g$ on a symmetric slice domain is real analytic by~\cite[Proposition 7]{perotti} and~\cite[Theorem 2.4]{global}, it follows that:

\begin{corollary}
Every slice regular function on a slice domain is real analytic.
\end{corollary}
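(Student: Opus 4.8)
The plan is to exploit the fact that real analyticity is a purely local property, and then to transfer it from the symmetric-slice-domain setting — where it is already known — to the arbitrary slice domain $\Omega$ by means of the Local Extension Theorem~\ref{localextension} just established. Fix an arbitrary point $p_0\in\Omega$; it suffices to prove that $f$ is real analytic in some open neighborhood of $p_0$, since $p_0$ ranges over all of $\Omega$.

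Applying Theorem~\ref{localextension} at $p_0$, I obtain a symmetric slice domain $N$, a slice domain $\Lambda$ with $p_0\in\Lambda\subseteq\Omega\cap N$, and a slice regular function $g:N\to\hh$ that coincides with $f$ throughout $\Lambda$. The key observation is that $\Lambda$, being a slice domain, is an \emph{open} subset of $\hh$ containing $p_0$, so the equality $f=g$ holds on a genuine open neighborhood of $p_0$.

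Now $g$ is slice regular on the \emph{symmetric} slice domain $N$, and for such functions real analyticity is already available: by~\cite[Proposition 7]{perotti} (alternatively by~\cite[Theorem 2.4]{global}), every slice regular function on a symmetric slice domain is real analytic. Hence $g$ is real analytic at $p_0$. Since $f$ agrees with the real analytic function $g$ on the open set $\Lambda\ni p_0$, the function $f$ is itself real analytic at $p_0$. As $p_0\in\Omega$ was arbitrary, $f$ is real analytic on all of $\Omega$, which is the claim.

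I do not anticipate any serious obstacle here: the entire content has been front-loaded into Theorem~\ref{localextension} and the cited real-analyticity results for symmetric slice domains. The only point that deserves care — and it is a genuine one, given that the main counterexample of the paper shows the naive global extension can fail — is to insist that the comparison with $g$ takes place on the open neighborhood $\Lambda$, rather than on any larger or reconstructed set; once restricted to $\Lambda$, the local match with a real analytic function is all that is needed, and no global compatibility of the various local extensions $g$ is required.
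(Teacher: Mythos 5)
Your proof is correct and follows exactly the paper's intended argument: the corollary is stated as an immediate consequence of the Local Extension Theorem~\ref{localextension} together with the real analyticity of slice regular functions on symmetric slice domains from~\cite[Proposition 7]{perotti} and~\cite[Theorem 2.4]{global}, using that $f$ agrees with $g$ on the open set $\Lambda\ni p_0$ and that real analyticity is local. You have merely spelled out the details the paper leaves implicit; there is nothing to correct.
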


The second consequence of Theorem~\ref{localextension} is the main result of this work.

\begin{theorem}[Local Representation Formula]\label{localrepresentationthm}
Let $\Omega$ be a slice domain and let $f : \Omega \to \hh$ be a slice regular function. For all $J,K \in \s$ with $J \neq K$ and all $x,y \in \rr$ with $y > 0$ such that $x+yJ, x+yK \in \Omega$, let us set
\begin{eqnarray*}
b(x+yJ,x+yK) &:=& (J-K)^{-1} \left[J f(x+yJ) - K f(x+yK)\right]\,,\\
c(x+yJ,x+yK) &:=& (J-K)^{-1} \left[f(x+yJ) - f(x+yK)\right] \,.
\end{eqnarray*}
Additionally, let us set $b(x,x) := f(x)$ and $c(x,x) := 0$ for all $x \in \Omega \cap \rr$. For every $p_0 \in \Omega$, there exists a slice domain $\Lambda$ with $p_0 \in \Lambda \subseteq \Omega$ such that the following properties hold for all $x,y \in \rr$ with $y \geq 0$:
\begin{itemize}
\item If $U = (x+y\s) \cap \Lambda$ is not empty, then $b,c$ are constant in $(U \times U) \setminus \{ (u,u) : u \in U \setminus \rr\}$.
\item If $I,J,K \in \s$ with $J \neq K$ are such that $x+yI,x+yJ,x+yK \in \Lambda$, then
\begin{equation}
f(x+yI) = b(x+yJ,x+yK) + I c(x+yJ,x+yK) \,. \label{localrepresentationformula}
\end{equation}
\end{itemize}
\end{theorem}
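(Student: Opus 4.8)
The plan is to derive the Local Representation Formula as a direct consequence of the Local Extension Theorem~\ref{localextension}. Given $p_0 \in \Omega$, that theorem furnishes a symmetric slice domain $N$ with $N \cap \rr \subset \Omega$, a slice domain $\Lambda$ with $p_0 \in \Lambda \subseteq \Omega \cap N$, and a slice regular function $g : N \to \hh$ that coincides with $f$ on $N \cap \rr$ and hence, by the Identity Principle~\ref{identity}, throughout $\Lambda$. I would take exactly this $\Lambda$ as the domain asserted in the statement. The whole point of passing to $g$ is that $N$ \emph{is} symmetric, so the classical Representation Formula~\ref{R-representationformula} applies to $g$ on $N$, whereas $f$ itself lives only on the possibly non-symmetric $\Omega$.

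The key step is to transfer the two assertions from $g$ on $N$ to $f$ on $\Lambda$. Fix $x, y \in \rr$ with $y \geq 0$ and suppose $x + yJ, x + yK \in \Lambda$ with $J \neq K$ (the real case $y = 0$ being immediate from $b(x,x) = f(x)$, $c(x,x) = 0$). Since $\Lambda \subseteq N$ and $g = f$ on $\Lambda$, the quantities $b(x+yJ, x+yK)$ and $c(x+yJ, x+yK)$ defined from $f$ equal the corresponding quantities defined from $g$. Theorem~\ref{R-representationformula} guarantees that, for $g$ on the symmetric domain $N$, these coincide with the intrinsic quaternions $b, c$ attached to the whole sphere $x + y\s \subset N$ and are independent of the chosen units. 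I would spell out that for any two pairs of units giving points in $U = (x+y\s) \cap \Lambda$, the values of $b$ and $c$ computed from $f$ all collapse to these same sphere-constants, which yields constancy of $b, c$ on $(U \times U) \setminus \{(u,u) : u \in U \setminus \rr\}$; the excluded diagonal is precisely where $J = K$ makes the defining formulas meaningless. For the representation identity, if $x+yI, x+yJ, x+yK \in \Lambda$ with $J \neq K$, then applying Theorem~\ref{R-representationformula} to $g$ and rewriting $g = f$ gives
\[
f(x+yI) = g(x+yI) = b + I c = b(x+yJ,x+yK) + I\, c(x+yJ,x+yK),
\]
as required.

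The main obstacle I anticipate is bookkeeping at the boundary between the symmetric domain $N$ and the slice domain $\Lambda$: one must check that whenever $x+yJ, x+yK \in \Lambda$ one genuinely has the full sphere $x+y\s \subseteq N$ so that Theorem~\ref{R-representationformula} is applicable, and that $g$ agrees with $f$ at \emph{all} the relevant points $x+yI, x+yJ, x+yK$, not merely on $N \cap \rr$. The first is automatic since $N$ is symmetric and $\Lambda \subseteq N$; the second follows from $g = f$ on $\Lambda$, but it is worth verifying carefully that the points entering the formula indeed lie in $\Lambda$ rather than merely in $N$. A secondary subtlety is the degenerate case $y = 0$ together with the single-point diagonal exclusion, where I would simply confirm directly that the conventions $b(x,x) = f(x)$, $c(x,x) = 0$ make the identity $f(x) = b(x,x) + I\,c(x,x)$ hold for every $I$.
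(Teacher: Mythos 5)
Your proposal is correct and follows essentially the same route as the paper's own proof: invoke the Local Extension Theorem~\ref{localextension} to obtain the symmetric slice domain $N$, the slice domain $\Lambda$ and the extension $g$, apply the Representation Formula~\ref{R-representationformula} to $g$ on $N$, and transfer the conclusions back to $f$ via the identity $g = f$ on $\Lambda$. The bookkeeping points you flag (applicability of Theorem~\ref{R-representationformula} on the full sphere, agreement of $g$ and $f$ at the relevant points, and the degenerate case $y=0$) are handled in the paper exactly as you describe.
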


\begin{proof}
By Theorem~\ref{localextension}, there exist a symmetric slice domain $N$, a slice domain $\Lambda$ with $p_0 \in \Lambda \subseteq \Omega \cap N$ and a slice regular function $g : N \to \hh$ such that $g$ coincides with $f$ in $\Lambda$. If we apply Theorem~\ref{R-representationformula} to $g:N\to\hh$ at $x+yJ,x+yK \in N$ (with $x,y \in \rr$ and $y \geq 0$), we can conclude that the quaternions $(J-K)^{-1} \left[J g(x+yJ) - K g(x+yK)\right]$ and $(J-K)^{-1} \left[g(x+yJ) - g(x+yK)\right]$ do not depend on the choice of $J,K\in\s$ with $J \neq K$. When $x+yJ,x+yK \in \Lambda$, these quaternions coincide with $b(x+yJ,x+yK)$ and $c(x+yJ,x+yK)$, respectively. Thus, the quaternions $b(x+yJ,x+yK),c(x+yJ,x+yK)$ do not depend on the choice of $x+yJ,x+yK$ within $U := (x+y\s)\cap\Lambda$. By construction,
\[f(x+yI) = g(x+yI) = b(x+yJ,x+yK) + I c(x+yJ,x+yK) \]
when $x+yI,x+yJ,x+yK \in \Lambda$.
\end{proof}

%%%%%%%%%%%%%%%%%%%%%%%%%%%%%%%%%%%%%%%%

\section{The Extension Theorem on simple domains}\label{sec:extension}

This section proves the Extension Theorem for simple slice domains announced in the Introduction. We recall that a slice domain $\Omega$ is called simple if, for any choice of $J,K \in \s$, the set
\[\Omega_{J,K}^+:=\{x+yJ \in \Omega_{J}^+: x+yK \in \Omega_{K}^+\}\]
is connected.

The next definition, proposition and remark provide many examples of simple domains.

\begin{definition}
A set $T\subseteq\hh$ is \emph{slice convex} if, for every $I\in\s$, the slice $T_I$ is a convex subset of $L_I$.
\end{definition}

\begin{proposition}
Let $\Omega$ be a slice domain. If $\Omega$ is slice convex, then it is a simple domain.
\end{proposition}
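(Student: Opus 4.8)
The plan is to prove that slice convexity implies simplicity by fixing arbitrary $J,K\in\s$ and showing directly that the set $\Omega_{J,K}^+$ is connected. My first step is to set up the natural correspondence between the two half-planes $L_J^+$ and $L_K^+$: the map sending $x+yJ$ to $x+yK$ (for $y>0$) is a bijection between $L_J^+$ and $L_K^+$ that fixes the real coordinates $(x,y)$. Under this identification, $\Omega_{J,K}^+$ consists precisely of those pairs $(x,y)$ with $y>0$ for which \emph{both} $x+yJ\in\Omega_J^+$ and $x+yK\in\Omega_K^+$; thus I will reduce the problem to an assertion about the intersection of two subsets of the open upper half-plane $\{(x,y):y>0\}\cong\rr^2_{>0}$.

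Next I would exploit slice convexity to understand the shape of each of the two sets in this upper-half-plane picture. Because $\Omega_J$ is convex in $L_J$ and $\Omega_K$ is convex in $L_K$, their respective intersections with the open half-planes, $\Omega_J^+$ and $\Omega_K^+$, are each convex (a convex set meets a half-plane in a convex set). Transporting through the coordinate identification above, both $\Omega_J^+$ and $\Omega_K^+$ become convex subsets of $\{(x,y):y>0\}$, and $\Omega_{J,K}^+$ corresponds to their intersection. Since the intersection of two convex sets is convex, and every convex set is path-connected (hence connected), it follows immediately that $\Omega_{J,K}^+$ is connected. As $J,K$ were arbitrary, $\Omega$ is simple.

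The one point requiring genuine care is whether convexity is truly preserved under the coordinate identifications. The subtlety is that the real coordinates $(x,y)$ parametrize $L_J^+$ and $L_K^+$ \emph{isometrically and affinely}: writing $z=x+yJ\in L_J$, the pair $(x,y)=(\re z,|\im z|)$ depends affinely on $z$ on the half-plane $y>0$, and likewise for $L_K$. An affine bijection carries convex sets to convex sets, so the image of the convex set $\Omega_J^+$ under $x+yJ\mapsto(x,y)$ is convex in $\rr^2$, and the same holds for $\Omega_K^+$ under $x+yK\mapsto(x,y)$. I expect this affinity check to be the main (if modest) obstacle, essentially because one must confirm that passing from the complex slice to the $(x,y)$-parameters is linear rather than merely continuous; once that is verified the convexity bookkeeping is routine.

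I would therefore organize the argument as: (i) record the affine identification of $L_J^+$ and $L_K^+$ with the half-plane $\{y>0\}$; (ii) observe that $\Omega_J^+$ and $\Omega_K^+$ map to convex planar sets; (iii) identify $\Omega_{J,K}^+$ with the intersection of these two convex sets and conclude connectedness; (iv) note $J,K$ arbitrary. This keeps the proof short and transparent, with the convexity of the intersection doing all the real work.
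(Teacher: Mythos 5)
Your proof is correct and follows essentially the same route as the paper: both arguments realize $\Omega_{J,K}^+$ as the intersection of two convex sets, namely the convex half-slice $\Omega_J^+$ and a ``copy'' of $\Omega_K^+$ transported into $L_J^+$ (equivalently, into the common $(x,y)$-half-plane), and conclude connectedness from convexity. Your explicit verification that the identification $x+yK \mapsto x+yJ$ is affine on the half-plane, and hence preserves convexity, simply spells out what the paper leaves implicit in the word ``copy''.
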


\begin{proof}
If $\Omega$ is slice convex then each half-slice $\Omega_J^+$ is convex because it is the intersection of the convex set $\Omega_J$ with the (convex) half-plane $L_J^+$. Moreover, for each $J,K \in \s$, the set
\[A := \{x+yJ \in L_J^+ : x+yK \in \Omega_K^+\}\]
is convex because it is a ``copy'' within the half-plane $L_J^+$ of the convex set $\Omega_K^+$. The set $\Omega_{J,K}^+$, which is the intersection of the convex sets $\Omega_J^+$ and $A$, is convex, whence connected.
\end{proof}

A similar technique proves what follows.

\begin{remark}
Let $\Omega$ be a slice domain. If $\Omega$ is starlike with respect to a point $x_0 \in \Omega \cap \rr$, then $\Omega$ is a simple domain.
\end{remark}

Here is an example of slice domain that is not simple.

\begin{example}\label{nonsimple}
The slice domain $\Omega$ of Example~\ref{ex:douren} is not a simple domain. Indeed, $\Omega_{I,-I}^+$ has two connected components: one is the open disk that forms the interior of $D_I$ in $L_I$; the other is $L_I^+\setminus(h_I\cup D_I)$.
\end{example}

For simple slice domains, the following result holds.

\begin{theorem}[Extension]\label{extension}
Let $f$ be a slice regular function on a simple slice domain $\Omega$. There exists a unique slice regular function $\tilde f : \widetilde{\Omega} \to \hh$ that extends $f$ to the symmetric completion of its domain.
\end{theorem}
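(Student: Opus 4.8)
The plan is to establish uniqueness immediately and then concentrate all the real difficulty in a gluing argument, where the hypothesis of simplicity is what prevents a ``jump''.

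\emph{Uniqueness and reduction.} The symmetric completion $\widetilde{\Omega}$ is a symmetric slice domain with $\widetilde{\Omega}\cap\rr=\Omega\cap\rr$. If $\tilde f_1,\tilde f_2:\widetilde{\Omega}\to\hh$ both extend $f$, then they agree on $\Omega\cap\rr$, a subset of each slice $(\widetilde{\Omega})_I$ with accumulation points; the Identity Principle~\ref{identity} on the slice domain $\widetilde{\Omega}$ then forces $\tilde f_1=\tilde f_2$. Hence it suffices to construct one slice regular extension.

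\emph{Construction of local pieces.} For each ordered pair $J\neq K$ I would invoke simplicity together with the General Extension Formula~\ref{extensionformulathm}. Choose a domain $T$ in $L_J$ with $T\cap L_J^+=\Omega_{J,K}^+$ and $T\cap\rr=\Omega\cap\rr$: such a $T$ exists and is connected precisely because $\Omega_{J,K}^+$ is connected by simplicity and accumulates at every real point of $\Omega$ from within $L_J^+$ (for $x_0\in\Omega\cap\rr$ and small $y>0$ both $x_0+yJ$ and $x_0+yK$ lie in $\Omega$). Taking $r:=f_J$ and $s:=f_K$ on the $K$-copy of $T$, which coincide on $\Omega\cap\rr$, Theorem~\ref{extensionformulathm} produces a symmetric slice domain $\Omega^{J,K}$ with $(\Omega^{J,K})_J^+=\Omega_{J,K}^+$ and $\Omega^{J,K}\cap\rr=\Omega\cap\rr$, and a slice regular $f^{J,K}:\Omega^{J,K}\to\hh$ coinciding with $f$ on $\Omega_{J,K}^+$, on the corresponding $K$-slice, and on $\Omega\cap\rr$. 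Since $\Omega$ is open, every sphere $x+y\s\subseteq\widetilde{\Omega}$ with $y>0$ meets $\Omega$ in at least a spherical cap, hence in two distinct units; together with the real neighbourhoods this shows $\widetilde{\Omega}=\bigcup_{J\neq K}\Omega^{J,K}$. A global $\tilde f$ will therefore exist as soon as the pieces $f^{J,K}$ are shown to agree on overlaps.

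\emph{Gluing near the real axis.} Fix two pairs and compare $f^{J,K}$ and $f^{J',K'}$ on the symmetric set $\Omega^{J,K}\cap\Omega^{J',K'}$. Near any real point $x_0\in\Omega$ the ball $B(x_0,\delta)$ lies in $\Omega$, and there, for small $y$ and every unit $I$, the Representation Formula~\ref{R-representationformula} shows that both $f^{J,K}$ and $f^{J',K'}$ coincide with $f$; thus they coincide on a nonempty open set. Recalling that slice regular functions on slice domains are real analytic (the corollary following Theorem~\ref{localextension}), two such functions agreeing on a nonempty open set agree on the whole connected component that contains it. Consequently $f^{J,K}=f^{J',K'}$ on every connected component of $\Omega^{J,K}\cap\Omega^{J',K'}$ whose closure meets $\Omega\cap\rr$.

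\emph{The crux.} The genuine obstacle, and the only place where simplicity is indispensable, is the remaining case of connected components that do \emph{not} reach the real axis: this is exactly the situation of Example~\ref{nonsimple}, where the two extension formulas differ ``by a jump'' on an interior region of a sphere invisible from $\rr$. The mechanism I would exploit is that, by simplicity, each $\Omega_{J,K}^+$ is connected and has $\Omega\cap\rr$ in its closure, so the piece $f^{J,K}$ is entirely determined, through real-analytic continuation from the real points, by $f|_{\Omega\cap\rr}$ alone; two pieces built from different pairs then carry the same boundary data along connected sets reaching $\rr$ and cannot disagree. Concretely, the statement reduces to the \emph{pointwise independence of the representation coefficients $b(x+yJ,x+yK),c(x+yJ,x+yK)$ from the chosen units}, equivalently to showing that connectedness of every $\Omega_{J,K}^+$ forbids the ``floating'' components responsible for the jump in the non-simple case; in monodromy terms, requiring $\Omega_{J,K}^+$ to be connected \emph{for all} $J,K$ kills every loop that could carry a nontrivial monodromy of the extension germ. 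I expect this pointwise independence to be the hard technical core, and I would prove it by transporting the coefficients along paths in the connected sets $\Omega_{J,K}^+$ down to $\Omega\cap\rr$, where they equal $f$ and $0$ respectively. Granting it, the pieces $f^{J,K}$ glue to a single $\tilde f:\widetilde{\Omega}\to\hh$; $\tilde f$ is slice regular since it is so on each $\Omega^{J,K}$ (the computation of $\bar\partial_I$ in the proof of Theorem~\ref{extensionformulathm} applies), and it extends $f$ because $f^{J,K}=f$ on $\Omega_{J,K}^+\cup(\Omega\cap\rr)$ and these sets cover $\Omega$.
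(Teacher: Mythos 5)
Your uniqueness argument, the construction of the pair pieces $f^{J,K}$ on $\Omega^{J,K}$ (these are exactly the functions the paper builds from Theorem~\ref{extensionformulathm}), the covering $\widetilde{\Omega}=\bigcup\Omega^{J,K}$, and the gluing on components of overlaps whose closure meets $\Omega\cap\rr$ are all sound. But at the point you yourself call ``the crux'' you stop proving and start describing, and what you describe is not a gap that can be patched from your setup: the pointwise independence of $b,c$ from the chosen units is not a technical lemma left to the reader, it \emph{is} the theorem. Neither of the two mechanisms you sketch works as stated. First, ``the pieces carry the same boundary data along $\rr$ and cannot disagree'' is precisely the inference that Example~\ref{ex:douren} refutes: there the two regular extensions agree on $\Omega\cap\rr$ (indeed on a whole symmetric slice domain) and yet differ by a jump on a component of the overlap that does not reach $\rr$ --- agreeing on $\rr$ propagates only through slice domains, and the overlap need not be one. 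Second, ``transporting the coefficients along paths'' requires paths along which \emph{both} pairs of units stay admissible, i.e.\ paths inside sets of the form $\{x+yJ \in L_J^+ : x+yK,\, x+yJ',\, x+yK' \in \Omega\text{-copies}\}$. Your pieces are indexed by \emph{pairs}, so their overlaps are governed by such three- and four-fold intersections, and simplicity only guarantees connectedness of the two-fold intersections $\Omega_{J,K}^+$. Nothing prevents the point at issue from lying in a ``floating'' component of the larger intersection, where your transport cannot even begin.

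The paper escapes this by indexing the pieces by a \emph{single} unit. For each $J_0\in\s$ it constructs $g_0$ on $N(J_0)$, the symmetric set with $N(J_0)_{J_0}^+=\Omega_{J_0}^+$ and $N(J_0)\cap\rr=\Omega\cap\rr$, sphere by sphere: $g_0(x+yI):=b(x+yJ_0)+I\,c(x+yJ_0)$, where the coefficients are computed using \emph{any} $K$ with $x+yK$ in a neighborhood $\Lambda$ of $x+yJ_0$ provided by the Local Representation Formula (Theorem~\ref{localrepresentationthm}), which makes them independent of $K$; slice regularity of $g_0$ then follows because near each sphere $g_0$ coincides with the pair-extension $g_0^{K_0}$ for $K_0$ close enough to $J_0$. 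With this choice, the overlaps are $N(J_0)\cap N(J_1)$, whose slices are copies of $\Omega_{J_0,J_1}^+$ --- exactly the sets that simplicity makes connected --- so the Identity Principle~\ref{identity} applies legitimately and the pieces glue over $\widetilde{\Omega}=\bigcup_{J}N(J)$. (A posteriori your pair-pieces do agree on all overlaps, since each equals the corresponding single-slice extension on its whole domain; but proving that requires the single-slice construction you skipped.) In short, your proposal is missing the paper's step 1(b)--(c), which is the actual mathematical content of the theorem.
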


\begin{proof}
For all $J \in \s$, let us adopt the notation $N(J)$ for the unique symmetric set such that $N(J)_J^+=\Omega_J^+$ and such that $N(J)\cap\rr=\Omega\cap\rr$. For all $J,K \in \s$, the intersection $N(J,K) := N(J) \cap N(K)$ is a slice domain because $\Omega$ is simple. We divide our proof into three steps.
\begin{enumerate}
\item Let us prove that, for each $J_0 \in \s$, there exists a slice regular function $g_0 : N(J_0) \to \hh$ that coincides with $f$ in $N(J_0)_{J_0}^+=\Omega_{J_0}^+$ and in $N(J_0)\cap\rr=\Omega\cap\rr$.
\begin{enumerate}
\item For each $K \in \s \setminus \{J_0\}$, let $g_0^K : N(J_0,K) \to \hh$ denote the slice regular function that coincides with $f_{J_0}$ in $N(J_0,K)_{J_0}^+ = \Omega_{J_0,K}^+$, with $f_K$ in $N(K,J_0)_{K}^+ = \Omega_{K,J_0}^+$ and with $f$ in $N(J_0,K) \cap \rr = \Omega \cap \rr$. Such a function exists by Theorem~\ref{extensionformulathm} and it is defined as $g_0^K(x+yI) := b(x+yJ_0,x+yK) + I c(x+yJ_0,x+yK)$ with
\[b(x+yJ_0,x+yK) := (J_0-K)^{-1} \left[J_0 f(x+yJ_0) - K f(x+yK)\right]\]
and
\[c(x+yJ_0,x+yK) := (J_0-K)^{-1} \left[f(x+yJ_0) - f(x+yK)\right] \]
for all $x,y \in \rr$ with $y \geq 0$ such that $x+y\s \subset N(J_0,K)$. For $y=0$, we get $b(x,x)=f(x)$ and $c(x,x)=0$.
\item Fix $p_0=x_0+y_0J_0$, either in $N(J_0)_{J_0}^+$ or in $N(J_0)\cap\rr$. By Theorem~\ref{localrepresentationthm}, $p_0$ has a neighborhood $\Lambda$ such that, for $\Lambda' := \Lambda \setminus \Lambda_{J_0}^+$, the maps $\Lambda' \ni x+yK \mapsto b(x+yJ_0,x+yK)$ and $\Lambda' \ni x+yK \mapsto c(x+yJ_0,x+yK)$ are constant with respect to $K$. Let us denote these constants as $b(x+yJ_0),c(x+yJ_0)$, respectively, and let us set 
\[g_0(x_0+y_0I) := b(x_0+y_0J_0) + I c(x_0+y_0J_0)\]
for all $I \in \s$. We have thus constructed a function $g_0 : N(J_0) \to \hh$.
\item For each $x_0+y_0\s \subset N(J_0)$ (with $x_0,y_0 \in \rr, y_0 \geq 0$), there exists a neighborhood $\Lambda$ of $x_0+y_0J_0$ such that the equality
\[ g_0(x+yI) = b(x+yJ_0,x+yK) + I c(x+yJ_0,x+yK) = g_0^K(x+yI) \]
holds for all $x,y \in \rr$ with $y \geq 0$ and all $I,K \in \s$ such that $x+yK \in \Lambda'$. For each $\delta>0$, let us consider the following neighborhood of $x_0+y_0\s$:
\[ T(x_0+y_0\s,\delta) := \{ x+yI : |x-x_0|^2+|y-y_0|^2<\delta^2, I \in \s \} \,.\]
There exist $\delta, \varepsilon > 0$ such that $\Lambda$ contains the set
\[ \{x+yK \in T(x_0+y_0\s,\delta) : |K-J_0|<\varepsilon\}\,.\]
If we pick any $K_0 \in \s$ such that $0<|K_0-J_0|<\varepsilon$, then $g_0$ coincides with $g_0^{K_0}$ in $T(x_0+y_0\s,\delta)$. It follows at once that $g_0$ is slice regular in $T(x_0+y_0\s,\delta)$ and that $g_0(x_0+y_0J_0)=f(x_0+y_0J_0)$, as desired.
\end{enumerate}
\item Any two slice regular functions $g_0 : N(J_0) \to \hh$ and $g_1 : N(J_1) \to \hh$ that coincide with $f$ in $N(J_0) \cap \rr = \Omega \cap \rr = N(J_1) \cap \rr$ coincide with each other in the slice domain $N(J_0,J_1)$ by the Identity Principle~\ref{identity}.
\item By steps {1.} and {2.}, there exists a slice regular function $g$ on
\[\bigcup_{J\in\s} N(J) = \widetilde \Omega\]
that coincides with $f$ in $\Omega \cap \rr$. By the Identity Principle~\ref{identity}, $g$ coincides with $f$ in the slice domain $\Omega$.
\end{enumerate}
\end{proof}

%%%%%%%%%%%%%%%%%%%%%%%%%%%%%

%% BIBLIOGRAPHY

%%%%%%%%%%%%%%%%%%%%%%%%%%%%%


\begin{thebibliography}{1}

\bibitem{advancesrevised}
F.~Colombo, G.~Gentili, I.~Sabadini, and D.~Struppa.
\newblock Extension results for slice regular functions of a quaternionic
  variable.
\newblock {\em Adv. Math.}, 222(5):1793--1808, 2009.

\bibitem{douren1}
X.~Dou and G.~Ren.
\newblock {R}iemann slice-domains over quaternions {I}.
\newblock arXiv:1808.06994 [math.CV], 2018.

\bibitem{douren2}
X.~Dou and G.~Ren.
\newblock {R}iemann slice-domains over quaternions {II}.
\newblock arXiv:1809.07979 [math.CV], 2018.

\bibitem{librospringer}
G.~Gentili, C.~Stoppato, and D.~C. Struppa.
\newblock {\em Regular functions of a quaternionic variable}.
\newblock Springer Monographs in Mathematics. Springer, Heidelberg, 2013.

\bibitem{cras}
G.~Gentili and D.~C. Struppa.
\newblock A new approach to {C}ullen-regular functions of a quaternionic
  variable.
\newblock {\em C. R. Math. Acad. Sci. Paris}, 342(10):741--744, 2006.

\bibitem{advances}
G.~Gentili and D.~C. Struppa.
\newblock A new theory of regular functions of a quaternionic variable.
\newblock {\em Adv. Math.}, 216(1):279--301, 2007.

\bibitem{perotti}
R.~Ghiloni and A.~Perotti.
\newblock Slice regular functions on real alternative algebras.
\newblock {\em Adv. Math.}, 226(2):1662--1691, 2011.

\bibitem{global}
R.~Ghiloni and A.~Perotti.
\newblock Global differential equations for slice regular functions.
\newblock {\em Math. Nachr.}, 287(5-6):561--573, 2014.

\bibitem{poli}
C.~Stoppato.
\newblock Poles of regular quaternionic functions.
\newblock {\em Complex Var. Elliptic Equ.}, 54(11):1001--1018, 2009.

\end{thebibliography}
\end{document}